\documentclass[preprint,12pt]{elsarticle}

\usepackage[T1]{fontenc}
\usepackage[utf8]{inputenc}
\usepackage{lmodern}
\usepackage{amsmath,amssymb,amsthm,mathtools}
\usepackage{microtype}
\usepackage[hidelinks]{hyperref}

\journal{Journal of Pure and Applied Algebra}
\biboptions{numbers,sort&compress}

\newcommand{\DMeff}{\mathbf{DM}^{\mathrm{eff}}}
\newcommand{\DMeffR}{\mathbf{DM}^{\mathrm{eff}}_R}

\newcommand{\DMo}{\mathbf{DM}^{\circ}}
\newcommand{\HI}{\mathbf{HI}}
\newcommand{\HIR}{\mathbf{HI}_R}
\newcommand{\HIo}{\mathbf{HI}^{\circ}}
\newcommand{\HIoR}{\mathbf{HI}^{\circ}_R}
\newcommand{\Ab}{\mathbf{Ab}}
\newcommand{\Sm}{\mathbf{Sm}}
\newcommand{\SmPr}{\mathbf{SmPr}}
\newcommand{\Rnr}{R_{\mathrm{nr}}}
\newcommand{\Gm}{\mathbb G_m}
\newcommand{\Z}{\mathbb Z}
\newcommand{\Q}{\mathbb Q}
\newcommand{\CH}{\operatorname{CH}}
\newcommand{\Pic}{\operatorname{Pic}}
\newcommand{\NS}{\operatorname{NS}}
\newcommand{\Griff}{\operatorname{Griff}}
\newcommand{\Hom}{\operatorname{Hom}}

\newcommand{\Coker}{\operatorname{Coker}}
\newcommand{\Ker}{\operatorname{Ker}}
\newcommand{\im}{\operatorname{Im}}

\newcommand{\id}{\operatorname{id}}
\newcommand{\ulHom}{\underline{\operatorname{Hom}}}
\newcommand{\Nis}{\mathrm{Nis}}
\newcommand{\tors}{\mathrm{tors}}
\newcommand{\calT}{\mathcal T}
\newcommand{\calA}{\mathcal A}

\newcommand{\bbone}{\mathbf 1}

\newcommand{\DMoR}{\mathbf{DM}^{\circ}_R}
\newcommand{\Spec}{\operatorname{Spec}}
\newcommand{\Tor}{\operatorname{Tor}}
\newcommand{\Choweff}{\operatorname{Chow}^{\mathrm{eff}}}
\newcommand{\Chowbir}{\operatorname{Chow}^{\mathrm{bir}}}
\newcommand{\Chownor}{\operatorname{Chow}^{\mathrm{nor}}}
\newcommand{\heff}{h^{\mathrm{eff}}}
\newcommand{\hbir}{h^{\mathrm{bir}}}
\newcommand{\hnor}{h^{\mathrm{nor}}}

\theoremstyle{plain}
\newtheorem{theorem}{Theorem}[section]
\newtheorem{proposition}[theorem]{Proposition}
\newtheorem{lemma}[theorem]{Lemma}
\newtheorem{corollary}[theorem]{Corollary}
\theoremstyle{definition}
\newtheorem{definition}[theorem]{Definition}
\newtheorem{assumption}[theorem]{Assumption}
\theoremstyle{remark}
\newtheorem{remark}[theorem]{Remark}

\begin{document}

\begin{frontmatter}

\title{A coproduct obstruction for derived unramified cohomology}

\author{David Kumallagov}

\begin{abstract}
Let \(k\) be a perfect field of exponential characteristic \(p\), and let
\(R\) be a commutative \(\Z[1/p]\)-algebra.  We prove that the first derived
unramified functor
\[
        F\longmapsto R^1_{\mathrm{nr},R}F
\]
from homotopy invariant Nisnevich sheaves with transfers of \(R\)-modules to
birational sheaves commutes with arbitrary small direct sums.  This gives a positive answer, after inverting the exponential characteristic, to a question of Kahn and
Sujatha; on smooth projective varieties no inversion is needed.

We also describe an obstruction to this for the functor $R^2_{\mathrm{nr},R}$ in categorical terms, which includes the familiar Griffiths group obstruction.  As applications of the motivic nature of the functors \(R^q_{\mathrm{nr}}\), we prove
torsion-order bounds and a correspondence-detection statement for surfaces.
\end{abstract}

\begin{keyword}
derived unramified cohomology \sep birational motives \sep triangulated categories \sep homotopy invariant sheaves with transfers \sep Griffiths groups \sep torsion order
\MSC[2020] 14F42 \sep 18G80 \sep 14C15 \sep 14E05
\end{keyword}

\end{frontmatter}

\section{Introduction}

Let \(k\) be a perfect field.  We write \(\DMeff(k)\) for Voevodsky's
triangulated category of effective motives with transfers and \(\HI(k)\) for the
heart of its homotopy \(t\)-structure, i.e. the category of homotopy invariant
Nisnevich sheaves with transfers.  If \(R\) is a commutative coefficient ring,
we write \(\DMeffR(k)\) and \(\HIR(k)\) for the corresponding \(R\)-linear
categories.  When no confusion is possible we omit \(k\) from the notation.

Kahn and Sujatha constructed the triangulated category \(\DMo\) of birational
motivic complexes and a fully faithful functor
\[
        i^\circ:\DMo\hookrightarrow \DMeff
\]
with right adjoint \(\Rnr\) \cite{KahnSujathaTriangulated,KahnSujathaDerived}.
For \(F\in\HI\), the derived unramified functors are
\[
        R^q_{\mathrm{nr}}F=H^q(\Rnr(F[0]))\in\HIo,
\]
where \(\HIo\) denotes the category of birational homotopy invariant sheaves.
We use the standard \(R\)-linear form of these constructions; see
\cite[Theorem~2.2.3]{BondarkoChowPure} and
\cite[Section~5.2]{BondarkoKumallagov}.

In \cite[Section 8, Question 4]{KahnSujathaDerived}, Kahn and Sujatha observe that
\(R^0_{\mathrm{nr}}\) preserves arbitrary direct sums, while
\(R^2_{\mathrm{nr}}\) does not preserve them in general, and ask whether
\(R^1_{\mathrm{nr}}\) preserves arbitrary direct sums.  The main result of the
present paper is the following positive answer in the prime-to-characteristic
coefficient range.

\begin{theorem}\label{thm:intro-main}
Let \(k\) be a perfect field of exponential characteristic \(p\), and let
\(R\) be a commutative \(\Z[1/p]\)-algebra.  Then
\[
        R^1_{\mathrm{nr},R}:\HIR(k)\longrightarrow\HIoR(k)
\]
preserves arbitrary small direct sums.  
\end{theorem}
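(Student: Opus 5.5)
We reduce to a statement about the values of $R^1_{\mathrm{nr}}$ on function fields. A birational homotopy invariant sheaf with transfers is determined by its values on (finitely generated) function fields $K/k$: a morphism in $\HIoR$ that is an isomorphism on all such $K$ is an isomorphism. Moreover, direct sums in $\HIR$ and $\HIoR$ are computed sectionwise on function fields, since such sheaves are determined by their generic stalks. So it suffices to show that for every function field $K=k(X)$, with $X$ smooth, the natural map
\[
\bigoplus_i (R^1_{\mathrm{nr}}F_i)(K)\to (R^1_{\mathrm{nr}}\textstyle\bigoplus_i F_i)(K)
\]
is bijective.

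The key input is the Kahn--Sujatha description of $\Rnr$ in low degrees via the Gersten/Rost--Schmid complex. Take $X$ smooth with $k(X)=K$; after inverting $p$ we can use de Jong--Gabber alterations in place of resolution, which is why $R$ must be a $\Z[1/p]$-algebra. Then $(R^q_{\mathrm{nr}}F)(K)$ should be a colimit, or a transfer-split summand of a colimit, of $H^q$ of $\Rnr(F)$ evaluated on models. Concretely, I expect $R^1_{\mathrm{nr}}F(K)=\varinjlim H^1_{\Nis}(X,F)$, transported along birational modifications that become isomorphisms in $\DMoR$; equivalently it is the cokernel of the map
\[
\bigoplus_{x\in X^{(0)}} F(k(x))\to \bigoplus_{x\in X^{(1)}} F_{-1}(k(x)),
\]
modulo the images needed to make it birationally invariant.

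Given that description, $H^1_{\Nis}(X,-)$ commutes with direct sums on a noetherian scheme of finite Krull dimension, because Nisnevich cohomology commutes with filtered colimits of sheaves. Each term of the Gersten complex also commutes with sums: $F\mapsto F(k(x))$ and $F\mapsto F_{-1}$ both do. The colimit over models then commutes with sums, so the $H^1$ comparison is bijective.

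The main obstacle is that $\Rnr$ is a right adjoint, so it need not commute with sums. Its failure for $R^2$ comes from a $\varprojlim^1$ or limit over all models, which is Griffiths-type data, rather than a colimit. I would handle this by computing $\Rnr(F)$ through the $t$-structure and the compact generators $M(X)$ of $\DMoR$. Since $\Hom_{\DMoR}(M^\circ(X),\Rnr F[1])=\Hom_{\DMeffR}(i^\circ M^\circ(X),F[1])$ and $M^\circ(X)$ is compact in $\DMoR$, the issue is whether $i^\circ M^\circ(X)$ is compact in $\DMeffR$, which it is not in general. The step to prove is that $\Hom(i^\circ M^\circ(X),-[1])$ restricted to $\HIR$ nevertheless commutes with sums. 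Here I would use the truncation of $i^\circ M^\circ(X)$: its negative part is built from motives $M(Y)(1)[2]$ with $\dim Y<\dim X$, which admit no nonzero maps into $F[1]$ by weight and $\Gm$-twist reasons ($\Hom(M(Y)(1)[2],F[1])=0$ and $\Hom(M(Y)(1)[2],F[2])$ is a contraction term). For $F[1]$, only the compact part $M(X)$ together with the first twist term contributes, and both commute with sums. In degree $2$ a further $\Hom(\cdot(1)[2],F[2])$ term, involving infinitely many $Y$ (a Griffiths group obstruction), genuinely breaks this argument. Making this truncation estimate rigorous with $R$-coefficients is where I expect most of the work to lie.
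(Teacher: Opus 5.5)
There is a genuine gap: the argument in your second and third paragraphs rests on a description of $R^1_{\mathrm{nr}}$ that is false, and it cannot be repaired. For smooth projective $X$, the Kahn--Sujatha sequence gives $R^1_{\mathrm{nr},R}F(X)=\Ker\bigl(H^1_{\Nis}(X,F)\to\DMeffR(\nu^{\ge1}M(X)_R,F[1])\bigr)$. This is a \emph{subgroup} of $H^1_{\Nis}(X,F)$. For example, over an algebraically closed field, $R^1_{\mathrm{nr}}\Gm(X)=\Pic^\tau(X)\subsetneq\Pic(X)$. It is neither a colimit of $H^1_{\Nis}$ over models (pullback along blow-ups only enlarges $\Pic$) nor a quotient of a Gersten term.

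More decisively, that reasoning proves too much. $H^2_{\Nis}(X,-)$ and the degree-two Gersten terms commute with direct sums just as well. So the same argument would show that $R^2_{\mathrm{nr}}$ preserves sums, contradicting the Griffiths-group counterexample. Whatever separates degree one from degree two is invisible in that description.

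Your last paragraph has the right idea and is, in outline, the paper's argument. Apply $\DMeffR(-,F[1])$ to the triangle $\nu^{\ge1}M(X)\to M(X)\to i^\circ\nu_{\le0}M(X)\to\nu^{\ge1}M(X)[1]$, and use compactness of $M(X)$ together with the vanishing of $\DMeffR(\nu^{\ge1}M(X),F[0])$. Then $R^1_{\mathrm{nr}}F(X)$ is the kernel of a transformation out of the sum-preserving functor $H^1_{\Nis}(X,-)$. Such kernels preserve sums even though the target $\DMeffR(\nu^{\ge1}M(X),F[1])$ does not; its failure to do so is exactly the degree-two obstruction. So a claim that the twist term commutes with sums is neither true in general nor needed.

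Two ingredients are still missing.

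First, the vanishing comes from $\nu^{\ge1}M(X)\in{\DMeffR}_{\le-1}$, which Kahn--Sujatha establish for \emph{smooth projective} $X$. It fails for general smooth $X$: for instance $\nu^{\ge1}M(\Gm)=\Z(1)[1]\simeq\Gm[0]$ lies in the heart, and $\DMeff(\nu^{\ge1}M(\Gm),\Gm)\simeq\Z$. Your Hom computations are also off by a shift. $\DMeffR(M(Y)(1)[2],F[1])\simeq F_{-1}(Y)$ is generally nonzero; what vanishes is $\DMeffR(M(Y)(1)[2],F[0])$. So this route yields the pointwise statement only on smooth projective varieties.

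Second, your reduction lands on arbitrary function fields, which need not have smooth projective models. You therefore need the descent step the paper uses: a morphism in $\HIoR$ (with $p$ invertible in $R$) that is an isomorphism on all smooth projective varieties is an isomorphism. This is Bondarko's criterion, which in effect relies on prime-to-$\ell$ alterations and transfers. That is where the $\Z[1/p]$ hypothesis really enters; in your proposal the alterations are attached instead to the incorrect colimit formula.
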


The first step is a pointwise theorem on smooth projective varieties.  This part
does not require the assumption \(R\supset\Z[1/p]\).

\begin{theorem}\label{thm:intro-pointwise}
Let \(R\) be any commutative ring and let \(X/k\) be smooth projective.  For
every small family \((F_i)_{i\in I}\) in \(\HIR\), the canonical morphism
\[
        \bigoplus_{i\in I} R^1_{\mathrm{nr},R}F_i(X)
        \longrightarrow
        R^1_{\mathrm{nr},R}\left(\bigoplus_{i\in I}F_i\right)(X)
\]
is an isomorphism.
\end{theorem}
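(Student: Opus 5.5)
We will compute \(R^1_{\mathrm{nr},R}F(X)\) for smooth projective \(X\) motivically and reduce the question to compactness. By adjunction and birational invariance of the sections of \(\HIoR\)-sheaves, for \(X\) smooth projective we have
\[
        R^q_{\mathrm{nr},R}F(X)\cong \Hom_{\DMoR}\bigl(\nu^{\le 0}M(X),\Rnr F[q]\bigr)\cong \Hom_{\DMeffR}\bigl(i^\circ\nu^{\le0}M(X),F[q]\bigr),
\]
where \(\nu^{\le 0}M(X)\) denotes the birational motive of \(X\) (the image of \(M(X)\) in \(\DMoR\)). The first identification uses that a birational sheaf evaluated on a smooth projective \(X\) equals \(\Hom\) from the birational motive of \(X\); the second is the adjunction \(i^\circ\dashv\Rnr\). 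Recall from Kahn–Sujatha that \(i^\circ\nu^{\le0}M(X)\) sits in an exact triangle
\[
        C\to M(X)\to i^\circ\nu^{\le0}M(X)\to C[1],
\]
with \(C\) in the localizing subcategory generated by Tate twists \(M(Y)(1)[2]\), i.e. \(C\in\DMeffR(1)\). The plan is to apply \(\Hom(-,F[1])\) and \(\Hom(-,\bigoplus F_i[1])\) to this triangle and compare via the five lemma.

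The terms coming from \(M(X)\) are \(H^{q}_{\Nis}(X,F)\). Since \(M(X)\) is compact and the heart embedding commutes with sums, these commute with direct sums for every \(q\). For \(C\), I would use the Gysin/Tate structure: \(\Hom(C,F[q])\) vanishes for \(q\le 0\) for degree reasons. Indeed, \(\Hom(M(Y)(1)[2],F[q])=\Hom(M(Y)(1),F[q-2])\) is \(H^{q-2}\) of \(\ulHom(\Z(1)[1],F)[-1]=F_{-1}[-1]\)-type objects (via contraction \(\ulHom(\Z(1),F)=F_{-1}[-1]\)), so it vanishes for \(q\le 0\). For \(q=1\) it should be a group built from sections of \(F_{-1}\) on codimension-one data, namely \(\Hom(C,F[1])\cong\Hom(C',F_{-1})\) for a suitable \(C'\) that is concentrated in homological degree \(0\) modulo terms irrelevant in this degree. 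Concretely, the long exact sequence reads
\[
0\to R^0_{\mathrm{nr}}F(X)\to F(X)\to \Hom(C,F)\to R^1_{\mathrm{nr}}F(X)\to H^1(X,F)\to \Hom(C,F[1]),
\]
with the low-degree \(\Hom(C,F[\le 1])\) expressible through \(F_{-1}\) on generic points of divisors, i.e. via the Gersten/Rost-type resolution \(\bigoplus_{x\in X^{(1)}}F_{-1}(k(x))\).

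The key step, and the main obstacle, is to show that \(\Hom(C,F)\) and \(\Hom(C,F[1])\), or at least the needed part, commute with direct sums even though \(C\) is not compact. I would first try to replace the abstract \(C\) by the explicit Gersten description. By the Rost–Voevodsky Gersten complex for \(F\) on \(X\),
\[
        R^1_{\mathrm{nr}}F(X)\cong \Ker\Bigl(\bigoplus_{x\in X^{(1)}}F_{-1}(k(x))\to \bigoplus_{y\in X^{(2)}}F_{-2}(k(y))\Bigr)\Big/ \partial F(k(X)),
\]
modulo the contributions that vanish birationally, i.e. the cokernel part measured by \(H^1\) of the complex restricted to degrees \(\le1\). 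Each term in this expression is built from stalks at fields, and \(F\mapsto F_{-n}(K)\) commutes with direct sums because the stalks and the contraction \(F_{-1}(K)=\Coker(F(\mathbb A^1_K)\to F(\Gm_K))\)-type formula commute with sums. Kernels and cokernels of maps between direct sums also commute with direct sums, so the isomorphism would follow once this Gersten-type description of \(R^1_{\mathrm{nr}}F(X)\) is justified, together with its naturality in \(F\). If instead only a description of \(R^1_{\mathrm{nr}}F(X)\) as the \(H^1\) of a truncated complex whose terms commute with sums is available, the same argument applies. The work lies in checking that the \(C\)-terms truly depend only on codimension \(\le 1\) data for \(q\le1\), which is where the weight/degree bound of \(\DMeffR(1)\) is used.
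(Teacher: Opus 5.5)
Your triangle \(C\to M(X)\to i^\circ\nu_{\le0}M(X)\to C[1]\) and its long exact sequence are the right setup, and they match the paper's. The gap is at the decisive step. You assert that \(\Hom(C,F[q])=0\) for \(q\le 0\) ``for degree reasons,'' but membership of \(C\) in the localizing subcategory \(\DMeffR(1)\) gives no degree bound at all, since that subcategory is closed under all shifts. The vanishing at \(q=0\) is in fact false for non-proper varieties. For \(X=\Gm\) one has \(C=\nu^{\ge1}M(\Gm)=\Z(1)[1]\), and \(\Hom(\Z(1)[1],\Gm[0])=\Z\neq0\). The case \(q=0\) is exactly where projectivity enters: for smooth projective \(X\), Kahn--Sujatha show \(\nu^{\ge1}M(X)_R\in{\DMeffR}_{\le-1}\), and \(t\)-orthogonality then gives \(\Hom(C,F[0])=0\). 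You also do not use this vanishing afterwards. You keep \(\Hom(C,F)\) in the sequence and make it the main obstacle to show that \(\Hom(C,F)\) and \(\Hom(C,F[1])\) commute with sums. The second claim is not needed and is false in general. By the paper's obstruction theorem, the failure of \(F\mapsto\Hom(C,F[1])\) to commute with sums is exactly the obstruction for \(R^2_{\mathrm{nr}}\), which is nonzero in the Griffiths-group examples.

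Once \(\Hom(C,F)=0\), the argument closes without any information about \(\Hom(C,-)\) on sums. The sequence gives \(R^1_{\mathrm{nr},R}F(X)=\Ker\bigl(H^1_{\Nis}(X,F)\to\Hom(C,F[1])\bigr)\). The functor \(H^1_{\Nis}(X,-)=\Hom(M(X)_R,-[1])\) commutes with sums because \(M(X)_R\) is compact. The kernel of a natural transformation \(\delta:\mathcal F\to\mathcal G\) with \(\mathcal F\) sum-preserving also preserves sums. An element \((x_i)\) of \(\bigoplus_i\mathcal F(F_i)\) maps to zero in \(\mathcal G(\bigoplus_iF_i)\) if and only if each \(\delta(x_j)=0\); to see this, apply \(\mathcal G\) of the projections \(\bigoplus_iF_i\to F_j\). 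Your Gersten fallback does not repair the gap. The displayed \(H^1\) of the Rost--Voevodsky complex is \(H^1_{\Nis}(X,F)\), not \(R^1_{\mathrm{nr}}F(X)\); for \(F=\Gm\) over an algebraically closed field it gives \(\Pic(X)\) rather than \(\Pic^\tau(X)\). The phrase ``modulo contributions that vanish birationally'' stands exactly for the kernel of \(H^1_{\Nis}(X,F)\to\Hom(C,F[1])\), which you leave undescribed.
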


The proof of Theorem \ref{thm:intro-pointwise} is based on a purely algebraic
obstruction statement.  Given a distinguished triangle in the triangulated category $\mathcal T$
\[
        A\longrightarrow P\longrightarrow B\longrightarrow A[1]
\]
with \(P\) compact, Theorem \ref{thm:formal-obstruction} measures the failure of
\(E\mapsto\mathcal T(B,E[q])\) to preserve direct sums on the heart of a
\(t\)-structure by a cokernel involving \(E\mapsto\mathcal T(A,E[q-1])\).  If
\(A\) is \(c\)-negative with respect to the \(t\)-structure, then the obstruction
vanishes for \(1\le q\le c\).

For the distinguished triangle
\[
        \nu^{\ge1}M(X)\longrightarrow M(X)
        \longrightarrow i^\circ\nu_{\le0}M(X)
        \longrightarrow \nu^{\ge1}M(X)[1],
\]
\cite[Proposition 2.3 and proof of Theorem 4.1]{KahnSujathaDerived} imply, for smooth projective
\(X\), that
\[
        \nu^{\ge1}M(X)\in \DMeff_{\le -1}.
\]
Hence the degree-one obstruction vanishes.  This proves Theorem
\ref{thm:intro-pointwise}.

The passage from smooth projective varieties to all smooth varieties uses
Bondarko's criterion for birational sheaves
\cite[Lemma 3.3.3(II)(3)]{BondarkoResolution}: over a perfect field of
exponential characteristic \(p\), after inverting \(p\), a morphism between
birational homotopy invariant sheaves with transfers is an isomorphism if and
only if it is an isomorphism on all smooth projective varieties.  This criterion
upgrades Theorem \ref{thm:intro-pointwise} to Theorem
\ref{thm:intro-main}.

The same obstruction formula also explains why degree two behaves differently.
For a smooth projective connected variety over an algebraically closed field,
Kahn--Sujatha compute \(R^2_{\mathrm{nr}}\mathbb G_m(X)\) through a short exact
sequence involving \(\operatorname{Hom}(\operatorname{Griff}_1(X),\Z)\).  More
generally, for torsion-free \(A\), the corresponding term is
\(\operatorname{Hom}(\operatorname{Griff}_1(X),A)\).  Since this functor does not
commute with infinite direct sums when \(\operatorname{Griff}_1(X)\otimes\Q\) is
infinite-dimensional, \(R^2_{\mathrm{nr}}\) fails to preserve direct sums in
general.  We also spell out a related Bockstein computation, stated without
proof in the introduction of Kahn--Sujatha, which gives a short exact sequence
for \(R^1_{\mathrm{nr}}(\Gm/m)\) in terms of \(\NS(X)_{\tors}\) and the
\(m\)-torsion of \(D^1(X)\).

Finally, we record two consequences which are independent of the proof of the
main theorem but help situate the functors \(R^q_{\mathrm{nr}}\).  First, for every
\(q>0\) the functor \(X\mapsto R^q_{\mathrm{nr}}F(X)\) on smooth projective
varieties is a normalized birational motivic invariant.
Kahn's torsion-order theorem therefore gives uniform annihilators for these
groups on varieties with a decomposition of the diagonal.  Second, the theorem
of Sato and Yamazaki on torsion birational motives of surfaces implies that, for
such surfaces, the action of correspondences on all \(R^q_{\mathrm{nr}}F\) is
detected by the first two classical unramified cohomology functors.

\section{Conventions and basic definitions}
\label{sec:conventions}

All coproducts in this paper are small.  If \(\mathcal T\) is a triangulated
category with small coproducts, an object \(P\in\mathcal T\) is called
\emph{compact} if the functor
\[
        \mathcal T(P,-):\mathcal T\longrightarrow \Ab
\]
commutes with small coproducts.

Let \(k\) be a perfect field.  We denote by \(\Sm_k\) the category of smooth varieties over \(k\), and by \(\SmPr(k)\) the full subcategory of
smooth projective \(k\)-varieties.  If
\(R\) is a commutative ring, \(\DMeffR(k)\) denotes Voevodsky's category of
\(R\)-linear effective motives.  We write \(M(X)_R\) for the motive of
\(X\in\Sm_k\).  The homotopy
\(t\)-structure on \(\DMeffR(k)\) has heart \(\HIR(k)\), the abelian category of
homotopy invariant Nisnevich sheaves with transfers of \(R\)-modules
\cite[Section 3.1]{VoevodskyMotives}.  We view an object
\(F\in\HIR(k)\) as the complex \(F[0]\) in \(\DMeffR(k)\).

A sheaf \(S\in\HIR(k)\) is \emph{birational} if, for every dense open immersion
\(U\subset X\) in \(\Sm_k\), the restriction map
\[
        S(X)\longrightarrow S(U)
\]
is an isomorphism.  Equivalently, \(S_{-1}=0\), where \((-)_ {-1}\) denotes
Voevodsky's contraction; see 
\cite[Proposition 1.4]{KahnSujathaDerived} or \cite[Proposition 3.2.1]{BondarkoKumallagov}.  We write \(\HIoR(k)\) for the full
subcategory of birational sheaves.  It is a Serre subcategory of \(\HIR(k)\), and
its inclusion has a right adjoint \(R^0_{\rm nr,R}\), see \cite[Lemma A.3.1]{KahnSujathaTriangulated},  \cite[Theorem 2.2.3]{BondarkoChowPure} or \cite[Theorem 4.2.6]{BondarkoKumallagov}.

Let \(\DMoR(k)\) denote the \(R\)-linear triangulated category of birational motives, obtained as the Verdier quotient of \(\DMeffR(k)\) by the
localizing subcategory \(\DMeffR(k)(1)\), see \cite[Definition 4.2.1 and Theorem 4.2.2]{KahnSujathaTriangulated}.  Kahn and Sujatha
construct adjunctions
\[
        \nu_{\le0}:\DMeffR(k)\rightleftarrows\DMoR(k):i^\circ,
        \qquad
        i^\circ:\DMoR(k)\rightleftarrows\DMeffR(k):R_{\rm nr,R},
\]
with \(\nu_{\le0}\dashv i^\circ\dashv R_{\rm nr,R}\), where
\(i^\circ\) is fully faithful \cite[Theorem 4.2.2]{KahnSujathaTriangulated};
for the \(R\)-linear form used here, see also 
\cite[Theorem 2.2.3]{BondarkoChowPure}.  We often identify \(\DMoR(k)\) with the
essential image of \(i^\circ\).  For \(F\in\HIR(k)\), the
\emph{derived unramified functors} are
\[
        R^q_{\rm nr,R}F:=H^q\bigl(R_{\rm nr,R}(F[0])\bigr)\in\HIoR(k).
\]
When \(R=\mathbb Z\), we omit \(R\) from the notation.

\section{A coproduct obstruction theorem}

This section is purely formal, and contains the algebraic core of the argument.

\begin{assumption}\label{ass:formal-setup}
Let \(\calT\) be a triangulated category with small coproducts and a
\(t\)-structure.  Let \(\calA\) be its heart.  We assume that small coproducts of
objects of \(\calA\), computed in \(\calT\), again belong to \(\calA\) and
coincide with coproducts in \(\calA\).  Let
\begin{equation}\label{eq:formal-triangle}
        A\longrightarrow P\longrightarrow B\longrightarrow A[1]
\end{equation}
be a distinguished triangle in \(\calT\), and assume that \(P\) is compact.
\end{assumption}

For \(E\in\calA\) and \(q\in\Z\), put
\[
        H^q(E)=\calT(P,E[q]),\qquad
        T^q(E)=\calT(A,E[q]),\qquad
        R^q(E)=\calT(B,E[q]).
\]
The triangle \eqref{eq:formal-triangle} gives a long exact sequence
\begin{equation}\label{eq:formal-long}
\cdots\to H^{q-1}(E)\xrightarrow{\alpha_E^{q-1}}T^{q-1}(E)
\to R^q(E)
\to H^q(E)\xrightarrow{\alpha_E^q}T^q(E)\to\cdots .
\end{equation}
Set
\[
        C^{q-1}(E)=\Coker(\alpha_E^{q-1}),\qquad
        K^q(E)=\Ker(\alpha_E^q).
\]
Then \eqref{eq:formal-long} contains a natural short exact sequence
\begin{equation}\label{eq:formal-short}
        0\to C^{q-1}(E)\to R^q(E)\to K^q(E)\to 0.
\end{equation}

\begin{lemma}\label{lem:finite-support}
Let \(\mathcal F,\mathcal G:\calA\to\Ab\) be additive functors and let
\(\delta:\mathcal F\to\mathcal G\) be a natural transformation.  If \(\mathcal F\)
preserves small coproducts, then the functor
\[
        E\longmapsto\Ker(\delta_E)
\]
preserves small coproducts.
\end{lemma}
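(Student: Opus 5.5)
The plan is to check directly that the canonical comparison map is bijective, using the coproduct projections in \(\calA\) together with naturality of \(\delta\). No hypothesis on \(\mathcal G\) is needed. Fix a small family \((E_i)_{i\in I}\) in \(\calA\) with coproduct \(E=\bigoplus_i E_i\) and structure maps \(\iota_i:E_i\to E\).

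\textbf{Projections.} Since \(\calA\) is additive, the universal property of the coproduct gives morphisms \(\pi_i:E\to E_i\) with \(\pi_i\iota_j=\delta_{ij}\). Here \(\delta_{ij}\) is the identity for \(i=j\) and zero otherwise. Because \(\mathcal F\) is additive, \(\mathcal F(\pi_i)\mathcal F(\iota_j)=\delta_{ij}\) as well.

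\textbf{Identifying the map.} The hypothesis says that \(\sum_i\mathcal F(\iota_i):\bigoplus_i\mathcal F(E_i)\to\mathcal F(E)\) is an isomorphism. By the previous relations, its inverse sends \(x\mapsto(\mathcal F(\pi_i)x)_i\), and only finitely many of these components are nonzero.

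Naturality of \(\delta\) with respect to \(\iota_i\) shows that \(\mathcal F(\iota_i)\) maps \(\Ker(\delta_{E_i})\) into \(\Ker(\delta_E)\). So the canonical map
\[
\bigoplus_i\Ker(\delta_{E_i})\longrightarrow\Ker(\delta_E)
\]
is the restriction of the isomorphism \(\sum_i\mathcal F(\iota_i)\) to the subgroup \(\bigoplus_i\Ker(\delta_{E_i})\subset\bigoplus_i\mathcal F(E_i)\). In particular it is injective.

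\textbf{Surjectivity.} Let \(x\in\Ker(\delta_E)\) and put \(y_i=\mathcal F(\pi_i)x\). Then \(y_i=0\) for all but finitely many \(i\), and \(x=\sum_i\mathcal F(\iota_i)y_i\). By naturality of \(\delta\) with respect to \(\pi_i\),
\[
\delta_{E_i}(y_i)=\delta_{E_i}\mathcal F(\pi_i)x=\mathcal G(\pi_i)\delta_E(x)=0 .
\]
Hence \((y_i)_i\in\bigoplus_i\Ker(\delta_{E_i})\) is a preimage of \(x\), and the map is surjective.

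\textbf{Where care is needed.} The only delicate point is that \(\mathcal G\) need not preserve coproducts, so we must not try to compare \(\mathcal G(E)\) with \(\bigoplus_i\mathcal G(E_i)\). Applying naturality along the projections \(\pi_i\) avoids this: the vanishing of \(\delta_E(x)\) is transported to each component using only \(\mathcal G(\pi_i)\), without any statement about \(\mathcal G(E)\).
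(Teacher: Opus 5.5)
Your proof is correct and complete. The paper gives no argument and only calls the lemma standard; your verification, using the projections \(\pi_i\) and naturality of \(\delta\) along them, is exactly that standard proof, and you correctly note that nothing about \(\mathcal G\) is needed. The same projection trick also underlies the paper's later remark, in the proof of Theorem~\ref{thm:formal-obstruction}, that the canonical map \(\eta\) is obviously injective.
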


\begin{proof}
This is a standard result.
\end{proof}

\begin{theorem}\label{thm:formal-obstruction}
Under Assumption \ref{ass:formal-setup}, for every small family \((E_i)_{i\in I}\)
in \(\calA\) and every \(q\in\Z\) there is a natural short exact sequence
\begin{multline}\label{eq:formal-obstruction}
0\longrightarrow
\bigoplus_{i\in I}R^q(E_i)
\longrightarrow
R^q\biggl(\bigoplus_{i\in I}E_i\biggr)\longrightarrow\\
\Coker\biggl[
\bigoplus_{i\in I}T^{q-1}(E_i)
\longrightarrow
T^{q-1}\biggl(\bigoplus_{i\in I}E_i\biggr)
\biggr]
\longrightarrow 0 .
\end{multline}
\end{theorem}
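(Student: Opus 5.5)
We compare the short exact sequence \eqref{eq:formal-short} for the family \((E_i)\) with the one for \(E=\bigoplus_i E_i\). Taking the direct sum over \(i\) of the sequences \(0\to C^{q-1}(E_i)\to R^q(E_i)\to K^q(E_i)\to0\) gives a short exact sequence, since direct sums of abelian groups are exact. The canonical maps \(E_i\to E\) induce a morphism from it to \(0\to C^{q-1}(E)\to R^q(E)\to K^q(E)\to 0\). We will show that the right vertical map is an isomorphism and that the left vertical map is injective with cokernel equal to the cokernel in \eqref{eq:formal-obstruction}. The snake lemma then gives that the middle map is injective with the same cokernel. This is exactly the claimed natural short exact sequence.

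For the right column, note that \(H^q(-)=\calT(P,(-)[q])\) preserves small coproducts on \(\calA\). This is because \(P\) is compact, coproducts in \(\calA\) agree with those in \(\calT\) by Assumption \ref{ass:formal-setup}, and the shift \([q]\) commutes with coproducts. Since \(K^q=\Ker(\alpha^q)\) with \(\alpha^q:H^q\to T^q\), Lemma \ref{lem:finite-support} gives \(\bigoplus_i K^q(E_i)\cong K^q(E)\).

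For the left column, we apply the snake lemma to the morphism of exact sequences \(\bigoplus_i H^{q-1}(E_i)\to\bigoplus_i T^{q-1}(E_i)\to\bigoplus_i C^{q-1}(E_i)\to0\) and \(H^{q-1}(E)\to T^{q-1}(E)\to C^{q-1}(E)\to 0\). We first need a small injectivity statement: the map \(\sigma:\bigoplus_i T^{q-1}(E_i)\to T^{q-1}(E)\) is injective for any object \(A\). Indeed, each \(E_j\) is a retract of \(E\) via the coproduct projections, and composing \(\sigma\) with the induced maps \(T^{q-1}(E)\to T^{q-1}(E_j)\) recovers the \(j\)-th component, so an element of the direct sum killed by \(\sigma\) has all components zero. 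Next, the first vertical map \(\bigoplus H^{q-1}(E_i)\to H^{q-1}(E)\) is an isomorphism by compactness. Write \(\phi\) for the induced map \(\bigoplus_i C^{q-1}(E_i)\to C^{q-1}(E)\). Right exactness of cokernels gives \(\Coker\phi\cong\Coker\sigma\). For \(\Ker\phi\), a direct diagram chase works: if \(x\in\bigoplus T^{q-1}(E_i)\) maps into \(\im\alpha_E^{q-1}=\sigma(\im\bigoplus\alpha_{E_i}^{q-1})\), then injectivity of \(\sigma\) forces \(x\in\im\bigoplus\alpha^{q-1}_{E_i}\). Hence \(\phi\) is injective with cokernel \(\Coker\sigma\).

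Finally, the snake lemma on the middle diagram gives \(\Ker(\bigoplus R^q(E_i)\to R^q(E))=0\) and \(\Coker\cong\Coker\phi\cong\Coker\sigma\), because the outer kernels vanish and the right cokernel vanishes. Naturality in the family is clear from the construction. The only genuinely non-formal point is the injectivity of \(\sigma\), which is what makes the left column behave. It is handled by the retraction argument above, so I expect no serious obstacle, only bookkeeping of the chase.
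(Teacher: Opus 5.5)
Your proposal is correct and follows the paper's proof essentially step for step. Both arguments use the same morphism of short exact sequences \(0\to C^{q-1}\to R^q\to K^q\to 0\), apply Lemma~\ref{lem:finite-support} to the \(K^q\)-column, identify \(\im\alpha^{q-1}_E=\sigma\bigl(\bigoplus_i\im\alpha^{q-1}_{E_i}\bigr)\) to make the \(C^{q-1}\)-column injective with cokernel \(\Coker\sigma\), and finish with the snake lemma. Your retraction argument only supplies the justification for injectivity of \(\sigma\), which the paper calls obvious.
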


\begin{proof}
Put \(E=\bigoplus_iE_i\).  Since \(P\) is compact, the functors
\(H^m(E)=\calT(P,E[m])\) preserve small coproducts.  By Lemma
\ref{lem:finite-support}, applied to \(\alpha^q:H^q\to T^q\), the canonical map
\begin{equation}\label{eq:K-preserves}
        \bigoplus_iK^q(E_i)\longrightarrow K^q(E)
\end{equation}
is an isomorphism.

Let
\[
        \eta:\bigoplus_iT^{q-1}(E_i)\longrightarrow T^{q-1}(E)
\]
be the canonical map; it is obviously injective.  Since \(H^{q-1}\) preserves
coproducts, the image of
\[
        H^{q-1}(E)\longrightarrow T^{q-1}(E)
\]
is exactly
\[
        \eta\biggl(\bigoplus_i\im\bigl(H^{q-1}(E_i)\to T^{q-1}(E_i)\bigr)\biggr).
\]
Consequently the canonical map
\begin{equation}\label{eq:C-obstruction}
        \bigoplus_i C^{q-1}(E_i)\longrightarrow C^{q-1}(E)
\end{equation}
is injective and has cokernel \(\Coker(\eta)\).

Now form the diagram of short exact sequences obtained from
\eqref{eq:formal-short} for the family \((E_i)\) and for \(E\):
\[
\begin{array}{ccccccccc}
0&\to&\bigoplus_iC^{q-1}(E_i)&\to&\bigoplus_iR^q(E_i)&\to&\bigoplus_iK^q(E_i)&\to&0\\
&&\downarrow&&\downarrow&&{}^{\sim}\!\downarrow&&\\
0&\to&C^{q-1}(E)&\to&R^q(E)&\to&K^q(E)&\to&0 .
\end{array}
\]
The right vertical arrow is an isomorphism by \eqref{eq:K-preserves}, and the
left vertical arrow has cokernel \(\Coker(\eta)\) by \eqref{eq:C-obstruction}.
The snake lemma gives the asserted short exact sequence.
\end{proof}

\begin{corollary}\label{cor:negative}
Assume \ref{ass:formal-setup}.  If \(A\in\calT_{\le -c}\) for some \(c\ge1\), then
for every \(1\le q\le c\) the functor
\[
        E\longmapsto R^q(E)=\calT(B,E[q])
\]
preserves small coproducts on \(\calA\).
\end{corollary}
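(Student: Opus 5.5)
By Theorem \ref{thm:formal-obstruction}, for every family \((E_i)_{i\in I}\) in \(\calA\) the canonical map \(\bigoplus_i R^q(E_i)\to R^q(\bigoplus_iE_i)\) is injective. Its cokernel is \(\Coker(\eta)\), where
\[
        \eta:\bigoplus_iT^{q-1}(E_i)\longrightarrow T^{q-1}\Bigl(\bigoplus_iE_i\Bigr).
\]
So the plan is to prove that the relevant \(T^{q-1}\)-groups vanish when \(A\in\calT_{\le -c}\) and \(1\le q\le c\). Then both sides of \(\eta\) are zero, \(\Coker(\eta)=0\), and the canonical map is an isomorphism.

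The key step is a vanishing statement: for every \(E\in\calA\) and every \(m\) with \(0\le m\le c-1\), we have
\[
        T^m(E)=\calT(A,E[m])=0.
\]
Since \(E\in\calA\subset\calT_{\ge0}\), we get \(E[m]\in\calT_{\ge -m}\). Here I use homological indexing, matching the paper's \(\DMeff_{\le -1}\) convention: \(X[1]\) shifts \(\calT_{\le n}\) to \(\calT_{\le n+1}\). Under this convention \(\calT_{\ge -m}\subseteq \calT_{\ge -c+1}\) for \(m\le c-1\). The orthogonality axiom of a \(t\)-structure gives \(\calT(\calT_{\le n},\calT_{\ge n+1})=0\). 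Taking \(n=-c\) and using \(A\in\calT_{\le -c}\), we obtain \(\calT(A,E[m])=0\).

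Now apply this with \(m=q-1\), which lies in \([0,c-1]\) precisely when \(1\le q\le c\). It kills \(T^{q-1}(E_i)\) for each \(i\), and it also kills \(T^{q-1}(\bigoplus_iE_i)\). For the latter I use Assumption \ref{ass:formal-setup}: the coproduct \(\bigoplus_iE_i\), computed in \(\calT\), lies in \(\calA\). Hence the source and target of \(\eta\) are both zero and the short exact sequence \eqref{eq:formal-obstruction} collapses to an isomorphism. Naturality of the canonical map is inherited from the theorem.

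The only delicate point is bookkeeping the shift and sign conventions, so that ``\(c\)-negative'' gives exactly the range \(1\le q\le c\). I expect it to be harmless. If the paper instead uses cohomological indexing, \(\calT_{\le -c}\) should be read as \(\calT^{\le -c}\)-type negativity in the corresponding sense, and the same orthogonality argument runs with the indices flipped. Either way, the required vanishing is that \(\calT(A,E[m])=0\) for \(0\le m\le c-1\).
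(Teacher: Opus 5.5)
Your argument is correct and is essentially the paper's proof: on the heart, $T^{q-1}$ vanishes by orthogonality between $\calT_{\le -c}$ and $\calT_{\ge -c+1}$, including at $\bigoplus_i E_i$, which lies in $\calA$. Hence the obstruction cokernel of Theorem~\ref{thm:formal-obstruction} is zero.

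One small correction. The relations you actually use, $E[m]\in\calT_{\ge -m}$ and $\calT(\calT_{\le n},\calT_{\ge n+1})=0$, belong to the paper's cohomological indexing written with subscripts. In that convention $[1]$ sends $\calT_{\le n}$ to $\calT_{\le n-1}$, not to $\calT_{\le n+1}$ as you state, so your description of the convention is wrong even though the computation is right.
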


\begin{proof}
For \(E\in\calA\) and \(1\le q\le c\), one has
\[
        E[q-1]\in\calT_{\ge -(q-1)}\subseteq \calT_{\ge -c+1}.
\]
The orthogonality axiom of the \(t\)-structure gives
\[
        T^{q-1}(E)=\calT(A,E[q-1])=0.
\]
Thus the obstruction term in Theorem \ref{thm:formal-obstruction} is zero.
\end{proof}

\section{The Kahn--Sujatha triangle}

We now apply the formal result to motives.  In this section \(R\) is an
arbitrary commutative ring and all motivic categories are \(R\)-linear.

For \(M\in\DMeffR\), let us write
\[
        \nu^{\ge1}M=\ulHom(R(1),M)(1)
\]
and a functorial triangle \cite[Proposition 1.2]{KahnSujathaDerived}
\begin{equation}\label{eq:KS-triangle}
        \nu^{\ge1}M\longrightarrow M\longrightarrow i^\circ\nu_{\le0}M
        \longrightarrow \nu^{\ge1}M[1].
\end{equation}
For \(C\in\DMeffR\), this triangle gives the long exact sequence
\cite[Proposition 2.2]{KahnSujathaDerived}
\begin{multline}\label{eq:KS-long}
\cdots\to H^n(X,R_{\rm nr,R} C)\to H^n(X,C)\to
\DMeffR(\nu^{\ge1}M(X)_R,C[n])\\
\to H^{n+1}(X,R_{\rm nr,R} C)\to\cdots .
\end{multline}
If \(C=F[0]\) with \(F\in\HIR\), then the hypercohomology spectral sequence for
\(R_{\rm nr,R}(F[0])\) degenerates because birational sheaves have no higher Nisnevich
cohomology \cite[Lemma 2.1]{KahnSujathaDerived}.  Hence
\[
        H^n(X,R_{\rm nr,R}(F[0]))\simeq R^n_{\mathrm{nr},R}F(X).
\]

The crucial input is the following  statement.

\begin{lemma}\label{lem:nu-negative}
Let \(X/k\) be smooth projective.  Then
\[
        \nu^{\ge1}M(X)_R\in{\DMeffR}_{\le -1}
\]
for the homotopy \(t\)-structure.  Consequently, for every \(F\in\HIR\),
\[
        \DMeffR(\nu^{\ge1}M(X)_R,F[0])=0.
\]
\end{lemma}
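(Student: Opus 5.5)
We will show that every homotopy sheaf \(\underline{H}_n(\nu^{\ge1}M(X)_R)\) with \(n\le 0\) vanishes; the vanishing of Hom into the heart is then formal. We use the homological convention in which \(\DMeffR_{\le -1}\) means \(\underline{H}_n=0\) for \(n\le 0\); in the cohomological indexing of Corollary \ref{cor:negative} this is \(H^m=0\) for \(m\ge 0\).

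For the first claim, start from the definition \(\nu^{\ge1}M=\ulHom(R(1),M)(1)\). Apply \(\ulHom(R(1),-)\) to \(M(X)_R\) for \(X\) smooth projective of dimension \(d\). By Voevodsky's duality, \(M(X)_R\simeq M(X)^*(d)[2d]\), so \(\ulHom(R(1),M(X)_R)\) is computed by motivic cohomology of \(X\times -\) twisted down by one. Its homology sheaves are
\[
U\longmapsto H^{2d-n-2}\bigl(X\times U,R(d-1)\bigr),
\]
suitably sheafified; this is the description of \cite[Proposition 2.3]{KahnSujathaDerived}. Each such sheaf is a birational-type Chow sheaf, and its contraction is controlled by cancellation: \(\ulHom(R(1),C(1))\simeq C\).

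The key step is to show that \(N:=\ulHom(R(1),M(X)_R)\) lies in \(\DMeffR_{\le 0}\), i.e. has no homology sheaves in negative homological degrees. On function fields \(K\), the group \(H^{2d-n-2}(X_K,R(d-1))\) vanishes for \(n<0\): weight \(d-1\) motivic cohomology of a \(d\)-dimensional smooth variety lives in degrees \(\le 2(d-1)\), and those degrees are \(\le d-1+\dim X_K\). Gersten/Bloch–Ogus vanishing, i.e. Voevodsky's result that sections of homotopy invariant sheaves inject into generic points, then gives \(\underline{H}_n(N)=0\) for \(n<0\). Twisting by \((1)\) shifts the connectivity by one: for \(C\in\DMeffR_{\le 0}\), the object \(C(1)\) lies in \(\DMeffR_{\le -1}\), since \(R(1)=\Gm[-1]\) and tensoring with \(\Gm\) preserves \(\le 0\) for the homotopy \(t\)-structure (\(\Gm\otimes\) of a sheaf is concentrated in degrees \(\ge 0\) homologically). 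Hence \(\nu^{\ge1}M(X)_R=N(1)=N\otimes\Gm[-1]\) lies in \(\DMeffR_{\le -1}\). Exactly this argument appears in the proof of \cite[Theorem 4.1]{KahnSujathaDerived}, and it is \(R\)-linear throughout, so we may cite it and check that the \(R\)-coefficients cause no problem.

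The main obstacle is to get the direction of the shift right: showing \(N\in\DMeffR_{\le 0}\), and that twisting drops one degree, rather than merely \(\le 0\). That \(\Gm[-1]\otimes\) moves \(\le 0\) into \(\le -1\) uses right t-exactness of \(-\otimes \Gm\), which holds because \(\Gm\) is a sheaf and the tensor product of connective objects is connective.

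The consequence is then formal. For \(F\in\HIR\), \(F[0]\) lies in the coheart side \(\DMeffR_{\ge 0}\). The orthogonality axiom, \(\mathrm{Hom}(\DMeffR_{\le -1},\DMeffR_{\ge 0})=0\), gives \(\DMeffR(\nu^{\ge1}M(X)_R,F[0])=0\), exactly as in the proof of Corollary \ref{cor:negative}.
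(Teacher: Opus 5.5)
There is a genuine error, and it sits exactly at the step you flagged as the main obstacle: the direction of the shift. In your convention, $C\in{\DMeffR}_{\le 0}$ means $\underline H_n(C)=0$ for $n<0$. Also $C(1)=(C\otimes R(1)[1])[-1]$, where $R(1)[1]$ is connective (it is $\Gm$ when $R=\Z$). Right $t$-exactness of $-\otimes R(1)[1]$ therefore only gives $C(1)\in{\DMeffR}_{\le 0}[-1]={\DMeffR}_{\le 1}$. The shift $[-1]$ lowers homological degrees, so a Tate twist costs one degree of connectivity instead of gaining one. Already $C=\Z$ gives $C(1)=\Gm[-1]$, with $\underline H_{-1}=\Gm\neq 0$. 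So $N\in{\DMeffR}_{\le 0}$ does not imply $\nu^{\ge1}M(X)_R=N(1)\in{\DMeffR}_{\le -1}$.

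Your conclusion still comes out right only because of a compensating mistake in the homology sheaves of $N=\ulHom(R(1),M(X)_R)$. Duality for $X$ and cancellation give $\DMeffR(M(U)_R,N[-n])=\DMeffR(M(U)_R(1),M(X)_R[-n])\simeq H^{2d-n}(X\times U,R(d-1))$. Hence $\underline H_n(N)$ is the sheafification of this group, not of $H^{2d-n-2}$. For instance, for $X=\mathbb P^1$ cancellation gives $N=\ulHom(R(1),R\oplus R(1)[2])=R[2]$, which sits in homological degree $2$; your formula puts it in degree $0$.

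With the correct indexing, your own vanishing argument proves more. Since $H^{2d-n}(X_K,R(d-1))=0$ whenever $2d-n>2(d-1)$, you get $\underline H_n(N)=0$ for $n\le 1$, i.e.\ $N\in{\DMeffR}_{\le -2}$. The first possibly nonzero sheaf is $\underline H_2(N)$, whose value on a function field $K$ is $\CH_1(X_K)\otimes R$. Right $t$-exactness then gives $\nu^{\ge1}M(X)_R=(N\otimes R(1)[1])[-1]\in{\DMeffR}_{\le -1}$, which is the lemma.

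With these two corrections your argument becomes a self-contained proof of what the paper simply takes from the proof of \cite[Theorem 4.1]{KahnSujathaDerived}. The final orthogonality step is fine as you wrote it.
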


\begin{proof}
The first statement is standard, see the proof of \cite[Theorem 4.1]{KahnSujathaDerived}.

 The final vanishing follows from
\(F[0]\in{\DMeffR}_{\ge0}\) and the orthogonality of the \(t\)-structure.
\end{proof}

\begin{proposition}\label{prop:R1-kernel}
For every smooth projective \(X/k\) and every \(F\in\HIR\), there is a natural
exact sequence
\[
        0\to R^1_{\mathrm{nr},R}F(X)\to H^1_{\Nis}(X,F)
        \xrightarrow{\alpha_{X,F}}
        \DMeffR(\nu^{\ge1}M(X)_R,F[1]).
\]
In particular,
\[
        R^1_{\mathrm{nr},R}F(X)=\Ker(\alpha_{X,F}).
\]
\end{proposition}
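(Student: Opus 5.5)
The plan is to specialize the long exact sequence \eqref{eq:KS-long} to \(C=F[0]\), \(M=M(X)_R\) and \(n=0,1\), and then use Lemma \ref{lem:nu-negative} to kill the term that precedes \(R^1_{\mathrm{nr},R}F(X)\).

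First, apply \(\DMeffR(-,F[n])\) to the triangle \eqref{eq:KS-triangle} for \(M=M(X)_R\). Then identify the terms:
\[
\DMeffR(i^\circ\nu_{\le0}M(X)_R,F[n])\simeq \DMoR(\nu_{\le0}M(X)_R,R_{\mathrm{nr},R}F[n])\simeq \DMeffR(M(X)_R,R_{\mathrm{nr},R}F[n]).
\]
The first isomorphism is the adjunction \(i^\circ\dashv R_{\mathrm{nr},R}\). The second uses \(\nu_{\le0}\dashv i^\circ\) together with the identification of \(\DMoR\) with the essential image of \(i^\circ\). The last group is \(H^n(X,R_{\mathrm{nr},R}F)\), which equals \(R^n_{\mathrm{nr},R}F(X)\) by the degeneration remark preceding Lemma \ref{lem:nu-negative}. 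Likewise \(\DMeffR(M(X)_R,F[n])=H^n_{\Nis}(X,F)\) by Voevodsky's comparison of Hom from motives with Nisnevich hypercohomology for homotopy invariant sheaves with transfers.

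With these identifications, the relevant stretch of \eqref{eq:KS-long} reads
\[
\DMeffR(\nu^{\ge1}M(X)_R,F[0])\to R^1_{\mathrm{nr},R}F(X)\to H^1_{\Nis}(X,F)\xrightarrow{\alpha_{X,F}}\DMeffR(\nu^{\ge1}M(X)_R,F[1]).
\]
Here \(\alpha_{X,F}\) is induced by composing with \(\nu^{\ge1}M(X)_R\to M(X)_R\). By Lemma \ref{lem:nu-negative} the leftmost group is zero, so \(R^1_{\mathrm{nr},R}F(X)\to H^1_{\Nis}(X,F)\) is injective with image \(\Ker(\alpha_{X,F})\). Naturality in \(X\) (for correspondences) and in \(F\) follows from the functoriality of the triangle \eqref{eq:KS-triangle} and of the adjunctions.

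The main point needing care is bookkeeping of the indexing and the connecting map. I need to check that the map \(R^1_{\mathrm{nr},R}F(X)\to H^1(X,F)\) in \eqref{eq:KS-long} is the one induced by the counit \(i^\circ R_{\mathrm{nr},R}F\to F\), i.e.\ by \(M(X)\to i^\circ\nu_{\le0}M(X)\). I also need to confirm that the term preceding it is \(\DMeffR(\nu^{\ge1}M(X)_R,F[0])\) and not a shift of it. I would verify this directly from the rotated triangle:
\[
\nu^{\ge1}M[0]\to M\to i^\circ\nu_{\le0}M\to\nu^{\ge1}M[1].
\]
Applying \(\DMeffR(-,F[1])\) gives
\[
\DMeffR(\nu^{\ge1}M[1],F[1])\to\DMeffR(i^\circ\nu_{\le0}M,F[1])\to\DMeffR(M,F[1])\to\DMeffR(\nu^{\ge1}M,F[1]),
\]
and the first term equals \(\DMeffR(\nu^{\ge1}M,F[0])\), as required.
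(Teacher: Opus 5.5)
Your proposal is correct and follows the same route as the paper: specialize \eqref{eq:KS-long} to \(C=F[0]\), kill \(\DMeffR(\nu^{\ge1}M(X)_R,F[0])\) using Lemma~\ref{lem:nu-negative}, and identify \(H^1(X,R_{\mathrm{nr},R}(F[0]))\) with \(R^1_{\mathrm{nr},R}F(X)\) via the degeneration remark. Your explicit index check, obtained by applying \(\DMeffR(-,F[1])\) to \eqref{eq:KS-triangle}, is a harmless extra verification that the paper leaves implicit; note that the triangle you call ``rotated'' is in fact just the original triangle \eqref{eq:KS-triangle}.
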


\begin{proof}
Apply \eqref{eq:KS-long} to \(C=F[0]\).  The term
\(\DMeffR(\nu^{\ge1}M(X)_R,F[0])\) vanishes by Lemma \ref{lem:nu-negative}, and
the preceding discussion identifies \(H^n(X,R_{\rm nr,R}(F[0]))\) with
\(R^n_{\mathrm{nr},R}F(X)\).
\end{proof}

\begin{theorem}\label{thm:Rq-obstruction}
Let \(X/k\) be smooth projective, and let \((F_i)_{i\in I}\) be a small family in
\(\HIR\).  Put \(A_X=\nu^{\ge1}M(X)_R\).  For every \(q\ge1\), there is a natural
short exact sequence
\begin{multline}\label{eq:Rq-obstruction}
0\longrightarrow
\bigoplus_iR^q_{\mathrm{nr},R}F_i(X)
\longrightarrow
R^q_{\mathrm{nr},R}\left(\bigoplus_iF_i\right)(X)
\longrightarrow\\
\Coker\left[
\bigoplus_i\DMeffR(A_X,F_i[q-1])
\longrightarrow
\DMeffR\left(A_X,\bigoplus_iF_i[q-1]\right)
\right]
\longrightarrow0.
\end{multline}
In particular, the canonical map
\[
        \bigoplus_iR^1_{\mathrm{nr},R}F_i(X)
        \longrightarrow
        R^1_{\mathrm{nr},R}\left(\bigoplus_iF_i\right)(X)
\]
is an isomorphism.
\end{theorem}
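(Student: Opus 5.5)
The plan is to apply Theorem \ref{thm:formal-obstruction} with \(\calT=\DMeffR\), the homotopy \(t\)-structure with heart \(\calA=\HIR\), and the Kahn--Sujatha triangle \eqref{eq:KS-triangle} for \(M=M(X)_R\), namely
\[
A_X=\nu^{\ge1}M(X)_R\longrightarrow P=M(X)_R\longrightarrow B=i^\circ\nu_{\le0}M(X)_R\longrightarrow A_X[1].
\]

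First I will check Assumption \ref{ass:formal-setup}. The object \(M(X)_R\) is compact in \(\DMeffR\); this is standard, since \(\DMeffR(M(X)_R,C[n])=H^n_{\Nis}(X,C)\) and Nisnevich cohomology of a noetherian scheme of finite Krull dimension commutes with filtered colimits, hence with direct sums. Direct sums of homotopy invariant sheaves with transfers, computed in \(\DMeffR\), again lie in the heart and are the coproducts in \(\HIR\). The reason is that the homotopy \(t\)-structure is compatible with coproducts: \(\DMeffR_{\ge0}\) and \(\DMeffR_{\le0}\) are described by vanishing of Nisnevich cohomology sheaves, and these commute with sums.

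Second I will identify the functors in the formal statement. Under the adjunction \(i^\circ\dashv R_{\rm nr,R}\),
\[
R^q(F)=\DMeffR(i^\circ\nu_{\le0}M(X)_R,F[q])\cong \DMoR(\nu_{\le0}M(X)_R,R_{\rm nr,R}F[q]).
\]
Using \(\nu_{\le0}\dashv i^\circ\) and the fact that \(i^\circ\) is fully faithful, this becomes \(\DMeffR(M(X)_R,i^\circ R_{\rm nr,R}F[q])=H^q(X,R_{\rm nr,R}F)\). By the degeneration discussed after \eqref{eq:KS-long}, this equals \(R^q_{\mathrm{nr},R}F(X)\). All these identifications are natural in \(F\), so the canonical comparison maps for direct sums match. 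Likewise \(T^{q-1}(F)=\DMeffR(A_X,F[q-1])\). Substituting into \eqref{eq:formal-obstruction} gives \eqref{eq:Rq-obstruction}.

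Third, for the case \(q=1\), Lemma \ref{lem:nu-negative} gives \(A_X\in\DMeffR_{\le-1}\). Corollary \ref{cor:negative} with \(c=1\) then shows that \(R^1\) preserves sums. Equivalently, the cokernel term vanishes because \(\DMeffR(A_X,F[0])=0\) for every \(F\in\HIR\), including \(\bigoplus_iF_i\).

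The main obstacle is the bookkeeping in the second step: the adjunction isomorphisms must be compatible with the canonical direct-sum maps. Naturality of all the identifications in \(F\) handles this. The verification that coproducts are compatible with the heart in the first step is standard but should be cited.
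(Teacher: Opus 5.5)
Your proposal is correct and is essentially the paper's proof: you apply Theorem \ref{thm:formal-obstruction} to the Kahn--Sujatha triangle for \(M(X)_R\), identify \(\DMeffR(i^\circ\nu_{\le0}M(X)_R,F[q])\) with \(R^q_{\mathrm{nr},R}F(X)\) naturally in \(F\), and kill the degree-one obstruction with Lemma \ref{lem:nu-negative}; your adjunction argument plus the degeneration of the hypercohomology spectral sequence is what the paper cites from Kahn--Sujatha. One cosmetic remark: the step \(\DMoR(\nu_{\le0}M(X)_R,N)\cong\DMeffR(M(X)_R,i^\circ N)\) uses only the adjunction \(\nu_{\le0}\dashv i^\circ\), not the full faithfulness of \(i^\circ\).
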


\begin{proof}
We apply Theorem \ref{thm:formal-obstruction} to the triangle
\eqref{eq:KS-triangle} for \(M=M(X)_R\).  Thus
\[
        A=A_X=\nu^{\ge1}M(X)_R,
        \qquad
        P=M(X)_R,
        \qquad
        B=i^\circ\nu_{\le0}M(X)_R .
\]
The motive \(M(X)_R\) is compact in \(\DMeffR\)
\cite[Section 3.2]{VoevodskyMotives}.  The homotopy \(t\)-structure is compatible
with small coproducts, and its heart is \(\HIR\).

For \(F_i\in\HIR\), \cite[Proposition 2.2 and Lemma 2.1]{KahnSujathaDerived} give natural
isomorphisms
\begin{equation}\label{eq:identify-B-Rnr}
        \DMeffR\bigl(i^\circ\nu_{\le0}M(X)_R,F_{i}[q]\bigr)
        \simeq R^q_{\mathrm{nr},R}F_{i}(X) .
\end{equation}

Theorem \ref{thm:formal-obstruction} therefore gives the displayed exact
sequence.  For \(q=1\), Lemma \ref{lem:nu-negative} gives
\[
        \DMeffR(A_X,F[0])=0,
\]
where \(F=\bigoplus_iF_i\).  Hence the obstruction term is
zero in degree one.
\end{proof}

The last assertion of Theorem \ref{thm:Rq-obstruction} proves Theorem
\ref{thm:intro-pointwise}.

\section{A projective criterion for birational sheaves}

The pointwise theorem just proved is a statement on smooth projective varieties.
We now use Bondarko's criterion for birational homotopy invariant sheaves to
pass from smooth projective varieties to all smooth varieties after inverting the
exponential characteristic.

Throughout this section \(k\) is perfect of exponential characteristic \(p\), and
\(R\) is a commutative \(\Z[1/p]\)-algebra.

\begin{lemma}\label{lem:evaluation-coproducts}
Let \(X\in\Sm_k\) and let \((S_i)_{i\in I}\) be a small family in \(\HIR\).  Then
\[
        \left(\bigoplus_i S_i\right)(X)\simeq \bigoplus_i S_i(X).
\]
Consequently, arbitrary small direct sums of birational sheaves in \(\HIR\) are
birational.
\end{lemma}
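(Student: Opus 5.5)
The plan is to reduce the first assertion to compactness of \(M(X)_R\) in \(\DMeffR\), and then deduce the second from the first together with the definition of birationality.

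For \(X\in\Sm_k\) and any \(S\in\HIR\), the sheaf \(S\) is homotopy invariant with transfers, so sections are computed as morphisms from the motive:
\[
        S(X)\simeq H^0_{\Nis}(X,S)\simeq \DMeffR\bigl(M(X)_R,S[0]\bigr),
\]
by Voevodsky's representability of Nisnevich hypercohomology \cite[Section 3.2]{VoevodskyMotives}. Apply this to \(S=\bigoplus_iS_i\). By the compatibility of the homotopy \(t\)-structure with coproducts (used already in the proof of Theorem \ref{thm:Rq-obstruction}), the coproduct in \(\HIR\) coincides with the coproduct of the \(S_i[0]\) in \(\DMeffR\). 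Since \(M(X)_R\) is compact, we get
\[
        \Bigl(\bigoplus_iS_i\Bigr)(X)\simeq\DMeffR\Bigl(M(X)_R,\bigoplus_iS_i[0]\Bigr)\simeq\bigoplus_i\DMeffR\bigl(M(X)_R,S_i[0]\bigr)\simeq\bigoplus_iS_i(X).
\]
We then check that these identifications are compatible with the canonical map \(\bigoplus_iS_i(X)\to(\bigoplus_iS_i)(X)\) induced by the coprojections. This holds because each identification is natural in \(S\).

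If one prefers to avoid \(\DMeffR\), there is a more elementary route. Sections of a Nisnevich sheaf on a quasi-compact scheme commute with filtered colimits, and the presheaf direct sum of Nisnevich sheaves on the Noetherian site \(\Sm_k\) is already a Nisnevich sheaf, with finite direct sums being sheaves. That presheaf sum is also homotopy invariant with transfers, so it computes the coproduct in \(\HIR\). This gives the same conclusion.

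For the consequence, let each \(S_i\) be birational and let \(U\subset X\) be a dense open immersion in \(\Sm_k\). Naturality of the isomorphism just proved in \(X\) gives a commutative square in which the restriction \((\bigoplus S_i)(X)\to(\bigoplus S_i)(U)\) is identified with \(\bigoplus_i\bigl(S_i(X)\to S_i(U)\bigr)\). That map is a direct sum of isomorphisms, hence an isomorphism, so \(\bigoplus_iS_i\) is birational.

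The main point needing care is the claim that the coproduct in \(\HIR\) agrees with the coproduct in \(\DMeffR\), equivalently with the sectionwise presheaf sum. Everything else is formal.
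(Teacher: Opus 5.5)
Your main argument is the same as the paper's: identify \(S(X)\) with \(\DMeffR(M(X)_R,S[0])\), use compactness of \(M(X)_R\), and deduce birationality by naturality in \(X\), a step the paper leaves implicit. Your elementary alternative via the sectionwise direct sum is also correct. It has the advantage of avoiding the identification of coproducts in \(\HIR\) with coproducts in \(\DMeffR\), which the paper takes for granted.
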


\begin{proof}
For a homotopy invariant sheaf with transfers \(S\), evaluation on \(X\) is
represented by the motive \(M(X)_R\) \cite[Proposition 2.1.1(5)]{BondarkoChowPure}:
\[
        S(X)=\DMeffR(M(X)_R,S[0]).
\]
and the statement follows from the compactness of \(M(X)_R\).
\end{proof}

We shall use the following consequence.

\begin{theorem}\label{thm:chow-descent}
Let
\[
        u:S\longrightarrow T
\]
be a morphism in \(\HIoR(k)\).  Suppose that
\[
        u(P):S(P)\xrightarrow{\sim}T(P)
\]
is an isomorphism for every smooth projective \(P/k\).  Then \(u(X)\) is an
isomorphism for every smooth \(X/k\).  In particular, \(u\) is an isomorphism of
birational sheaves.
\end{theorem}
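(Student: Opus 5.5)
The plan is to reduce to a vanishing statement for a single birational sheaf and then prove that vanishing with alterations and transfers. This is essentially the content of Bondarko's criterion \cite[Lemma 3.3.3(II)(3)]{BondarkoResolution}. If that lemma is available in the stated form, the theorem follows by citing it directly.

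\emph{Reduction.} Since \(\HIoR(k)\) is a Serre subcategory of \(\HIR(k)\), the sheaves \(K=\Ker(u)\) and \(Q=\Coker(u)\) are birational. Evaluation on \(X\) is exact on \(\HIR\) once we pass to the henselian local rings of points, i.e.\ stalks. By birationality, \(S(X)\) is the stalk at the generic point for \(X\) connected: each restriction \(S(X)\to S(U)\) is an isomorphism, so \(S(X)\simeq S(k(X))=\operatorname{colim}_U S(U)\). It therefore suffices to show the following claim for \(G\in\{K,Q\}\): if \(G\) is a birational sheaf of \(\Z[1/p]\)-modules with \(G(P)=0\) for every smooth projective \(P\), then \(G(X)=0\) for every smooth \(X\). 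This needs a little care with how exactness of \(0\to K\to S\to T\to Q\to0\) interacts with sections, but function fields are filtered colimits of smooth schemes. For \(P\) projective the sections of \(K\) and \(Q\) vanish, because \(u(P)\) is an isomorphism and \(P\) is likewise computed by its function field. The final assertion then follows because a morphism in \(\HIoR\) that is an isomorphism on all sections is an isomorphism.

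\emph{Key step.} Fix a connected smooth \(X\) and a prime \(\ell\ne p\). By Gabber's \(\ell'\)-alteration theorem, there is a proper generically finite surjection \(f:Y\to X\) of degree \(d_\ell\) prime to \(\ell\), with \(Y\) smooth and open in a smooth projective \(\bar Y\). Shrink \(X\) to a dense open \(U\) over which \(f\) is finite flat; this is harmless because \(G(X)=G(U)\) and \(G(Y)=G(f^{-1}U)\) by birationality. The transfer structure gives \(f_*f^*=d_\ell\cdot\id\) on \(G(U)\). But \(f^*\) lands in \(G(f^{-1}U)=G(\bar Y)=0\), so \(d_\ell\) annihilates \(G(X)\). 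In characteristic \(0\) one uses Hironaka instead, taking \(d=1\). The integers \(d_\ell\) (\(\ell\ne p\)) together with \(p\) generate the unit ideal of \(\Z\), and \(p\) is invertible on \(G(X)\). Hence \(G(X)=0\).

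\emph{Main obstacle.} The delicate point is the identity \(f_*f^*=\deg f\) on sections of a homotopy invariant sheaf with transfers, for a finite flat \(f\). I would use the standard fact that the composite correspondence \(U\to Y_U\to U\) is \(\deg(f)\cdot\Delta_U\) in \(\mathrm{Cor}(U,U)\). The other point to check is that Gabber's theorem gives \(Y\) with a smooth projective compactification. If the compactification is only a regular proper one, I would instead pass to the function field, using \(G(Y)=G(k(Y))=G(\bar Y)\), and again invoke birationality.
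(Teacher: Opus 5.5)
Your proof is correct, but it takes a different route from the paper. The paper gives no argument of its own: it cites Bondarko's criterion \cite[Lemma 3.3.3(II)(3)]{BondarkoResolution} for $R=\Z[1/p]$ and says that general $R$ is ``proved similarly''. You instead prove the statement directly, in two steps. First, you reduce to a birational $G$ vanishing on smooth projective varieties. This is sound, because evaluation at $k(X)$ is the Nisnevich stalk at the generic point and is therefore exact on $\HIR$. You could even skip this step by applying the trace identity to $u$ itself, which shows that $\deg f$ kills $\Ker u(X)$ and $\Coker u(X)$. Second, you kill $G(X)$ using $f_*f^*=\deg f$ for Gabber $\ell'$-alterations. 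Your argument is elementary and transparent, and it handles every $\Z[1/p]$-algebra $R$ at once, so it actually supplies the ``similarly'' that the paper leaves to the reader. The citation buys brevity and places the result in Bondarko's motivic framework, but it hides the geometric input.

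Three small repairs are needed.

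\emph{Compactification.} Your worry about projectivity disappears if you apply Gabber's theorem to a Nagata compactification $\bar X$ of $X$, with $Z=\bar X\setminus X$. The theorem yields a smooth quasi-projective source over a finite extension $k'/k$ of degree prime to $\ell$. That source is then proper over $k$, hence projective, and it is smooth over $k$ because $k$ is perfect, so $k'/k$ is separable. Your function-field fallback would not have helped, since the hypothesis only covers projective $P$.

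\emph{Choice of component.} An $\ell'$-alteration only guarantees that \emph{some} component of $Y$ over the generic point of $X$ has degree prime to $\ell$. Restrict to that component before applying the trace identity.

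\emph{Arithmetic.} The $d_\ell$ together with $p$ need not generate the unit ideal of $\Z$; they generate $(p)$ if $p$ divides every $d_\ell$. The correct statement is that $\gcd_\ell d_\ell$ is a power of $p$, i.e.\ the $d_\ell$ generate the unit ideal of $\Z[1/p]$. That is exactly what you need, since $G(X)$ is a $\Z[1/p]$-module.
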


\begin{proof}
For \(R=\Z[1/p]\) this is exactly 
\cite[Lemma 3.3.3(II)(3)]{BondarkoResolution}, and the case of arbitrary $R$ is proved similarly.
\end{proof}

\begin{proof}[Proof of Theorem \ref{thm:intro-main}]
Let \((F_i)_{i\in I}\) be a small family in \(\HIR\), and let
\[
        \Phi:\bigoplus_iR^1_{\mathrm{nr},R}F_i
        \longrightarrow
        R^1_{\mathrm{nr},R}\left(\bigoplus_iF_i\right)
\]
be the canonical morphism.  The right hand side is birational by construction,
and the left hand side is birational by Lemma \ref{lem:evaluation-coproducts}.

Let \(P/k\) be smooth projective.  By Lemma \ref{lem:evaluation-coproducts},
\[
        \left(\bigoplus_iR^1_{\mathrm{nr},R}F_i\right)(P)
        \simeq
        \bigoplus_iR^1_{\mathrm{nr},R}F_i(P).
\]
Under this identification, \(\Phi(P)\) is exactly the isomorphism of Theorem
\ref{thm:intro-pointwise}.  Therefore \(\Phi(P)\) is an isomorphism for all
smooth projective \(P\).  Theorem \ref{thm:chow-descent} now implies that
\(\Phi\) is an isomorphism of birational sheaves.  Evaluating on any smooth
\(X\) and using Lemma \ref{lem:evaluation-coproducts} gives the desired isomorphism in Theorem \ref{thm:intro-main}.
\end{proof}

\begin{remark}\label{rem:no-inversion}
If \(R\) is arbitrary, Theorem \ref{thm:intro-pointwise} still gives the desired
isomorphism after evaluation on every smooth projective variety.  Thus the same
sheaf-level conclusion holds over fields for which every finitely generated
function field admits a smooth projective model; in particular this recovers the
integral characteristic zero case.  
\end{remark}

\begin{remark}
Let \(n\ge0\), and let
$
DM^{n\operatorname{-bir}}
=
DM^{\operatorname{eff}}/DM^{\operatorname{eff}}\{n+1\}
$
be the \(n\)-birational category from \cite[Section 3.2]{BondarkoKumallagov}. There are adjunctions
\[
        p_{n}:\DMeffR \rightleftarrows DM^{n\operatorname{-bir}}:i^{(n)},
        \qquad
        i_{n}:\DMeffR \{n\}\rightleftarrows\DMeffR:r_{n},
\]
and set $\nu^{\ge n} = i_{n}\circ r_n$.

For \(F\in\mathbf{HI}\) and \(X\in\mathbf{SmPr}\), put
\[
\mathbb H^q_{nr,n}(X,F)
=
DM^{\operatorname{eff}}
\bigl(i^{(n)}p_nM(X),F[q]\bigr).
\]
Then the same proof as Theorem~3.3 gives a natural exact sequence

\[
\begin{aligned}
0\longrightarrow
\bigoplus_i\mathbb H^q_{\operatorname{nr},n}(X,F_i)
\longrightarrow
\mathbb H^q_{\operatorname{nr},n}\left(X,\bigoplus_iF_i\right)
\longrightarrow
\operatorname{Coker}\left[
\bigoplus_i
DM^{\operatorname{eff}}
\bigl(\nu^{\ge n+1}M(X),F_i[q-1]\bigr)
\right.
\\
\left.
\longrightarrow
DM^{\operatorname{eff}}
\bigl(\nu^{\ge n+1}M(X),\bigoplus_iF_i[q-1]\bigr)
\right]
\longrightarrow0 .
\end{aligned}
\]

Now
\(\nu^{\ge n+1}M(X)\in DM^{\operatorname{eff}}_{\le -(n+1)}\);
hence
\[
\bigoplus_i\mathbb H^q_{nr,n}(X,F_i)
\simeq
\mathbb H^q_{nr,n}\left(X,\bigoplus_iF_i\right)
\quad
(1\le q\le n+1).
\]
For \(n=0\) these total groups coincide with the groups
\(R^q_{nr}F(X)\). For \(n>0\) they are related to
the sheaves \(R^q_{nr,n}F\) by a Nisnevich hypercohomology spectral
sequence, and one should not identify them without further hypotheses.
\end{remark}

\section{The degree two obstruction}

Theorem \ref{thm:Rq-obstruction} also explains why degree two is different.  For
\(q=2\), the obstruction to preservation of direct sums by evaluation of
\(R^2_{\mathrm{nr},R}\) at a smooth projective \(X\) is
\begin{equation}\label{eq:R2-obstruction}
\Coker\left[
\bigoplus_i\DMeffR(\nu^{\ge1}M(X)_R,F_i[1])
\to
\DMeffR\left(\nu^{\ge1}M(X)_R,\bigoplus_iF_i[1]\right)
\right].
\end{equation}
There is no reason for this cokernel to vanish.

We now recall the concrete form of this obstruction in the computation of
Kahn and Sujatha for \(\Gm\).  In this section let \(k\) be algebraically closed and
let \(X/k\) be smooth projective and connected.  Recall that
\cite[Theorem 1(i)--(iii)]{KahnSujathaDerived}
\[
        R^0_{\mathrm{nr}}\Gm(X)=k^*,
        \qquad
        R^1_{\mathrm{nr}}\Gm(X)\simeq \Pic^\tau(X),
\]
and a short exact sequence
\begin{equation}\label{eq:R2-Gm}
        0\to D^1(X)\to R^2_{\mathrm{nr}}\Gm(X)
        \to \Hom(\Griff_1(X),\Z)\to0,
\end{equation}
where the notation is as follows.

\begin{definition}\label{def:cycle-notation}
Let \(\Pic^\tau(X)\subset\Pic(X)\) be the subgroup of line bundles numerically
equivalent to zero, and put
\[
        N^1(X)=\Pic(X)/\Pic^\tau(X).
\]
Let \(A^{\rm alg}_1(X)\) be the group of one-cycles on \(X\) modulo algebraic
equivalence, and let \(N_1(X)\) be the group of one-cycles modulo numerical
equivalence.  The numerical Griffiths group of one-cycles is
\[
        \Griff_1(X)=\Ker\bigl(A^{\rm alg}_1(X)\to N_1(X)\bigr).
\]
Finally,
\[
        D^1(X)=\Coker\bigl(N^1(X)\to\Hom(N_1(X),\Z)\bigr),
\]
where the map is induced by the intersection pairing between divisors and
one-cycles.
\end{definition}

More generally, for a torsion-free abelian group \(A\), set
\(F_A=\Gm\otimes A\).  Kahn and Sujatha note in Section 8 of
\cite{KahnSujathaDerived} that the same argument gives
\begin{equation}\label{eq:R2-GmA}
        0\to D^1(X)\otimes A\to R^2_{\mathrm{nr}}F_A(X)
        \to \Hom(\Griff_1(X),A)\to0.
\end{equation}

We shall use the following elementary linear algebra observation.

\begin{lemma}\label{lem:hom-vector}
Let \(V\) be an infinite-dimensional \(\Q\)-vector space.  Then
\[
        A\longmapsto \Hom_{\Q}(V,A)
\]
does not commute with arbitrary direct sums.
\end{lemma}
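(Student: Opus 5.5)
We will show that the canonical comparison map
\[
        \Phi:\bigoplus_{n\ge1}\Hom_{\Q}(V,A_n)\longrightarrow\Hom_{\Q}\Bigl(V,\bigoplus_{n\ge1}A_n\Bigr)
\]
fails to be surjective, for the simplest choice of family: \(A_n=\Q\) for all \(n\ge1\), indexed by a countable set. Here \(\Phi\) sends a finitely supported tuple \((f_n)\) to the map \(v\mapsto(f_n(v))_n\). Its image consists of those homomorphisms \(g:V\to\bigoplus_n\Q\) whose coordinates \(\pi_n\circ g\) vanish for all but finitely many \(n\). So it suffices to build a \(\Q\)-linear map \(g:V\to\bigoplus_{n}\Q\) such that infinitely many coordinates \(\pi_n\circ g\) are nonzero.

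The construction goes as follows. Since \(V\) is infinite-dimensional, it contains a countably infinite linearly independent family \(e_1,e_2,\dots\). Extend this family to a basis \(\{e_1,e_2,\dots\}\cup\{b_j\}_{j\in J}\) of \(V\); this uses Zorn's lemma, or simply the existence of bases for \(\Q\)-vector spaces. Define \(g\) on the basis by
\[
        g(e_n)=\epsilon_n,\qquad g(b_j)=0,
\]
where \(\epsilon_n\) is the \(n\)-th standard basis vector of \(\bigoplus_n\Q\). This extends uniquely to a \(\Q\)-linear map \(g:V\to\bigoplus_n\Q\). Each individual value of \(g\) is a finite sum, so it lies in the direct sum, as required. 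On the other hand, \(\pi_n(g(e_n))=1\neq0\) for every \(n\), so infinitely many coordinates of \(g\) are nonzero. Hence \(g\notin\im\Phi\), and \(\Phi\) is not an isomorphism.

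There is no real obstacle here. The only points to state carefully are that \(\Phi\) is the canonical map (so that "commute with direct sums" means this map is an isomorphism), and that the target of \(g\) is the direct sum rather than the product. If one prefers to avoid choice, one can restrict to a countable-dimensional subspace \(W\subset V\) instead. Then \(\Hom_\Q(V,-)\to\Hom_\Q(W,-)\) is split surjective once a complement of \(W\) is fixed, and the failure for \(W\) transfers to \(V\). Since a complement still requires a basis argument, we will simply use the basis extension.

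The same argument works with \(\Hom_\Z(V,A_n)\) for torsion-free \(A_n\) containing \(\Q\), which is the form needed for \eqref{eq:R2-GmA}. There we have \(\Hom(\Griff_1(X),A)=\Hom_\Q(\Griff_1(X)\otimes\Q,A)\) for \(\Q\)-vector spaces \(A\).
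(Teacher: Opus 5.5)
The paper gives no proof of this lemma; it is stated as an ``elementary linear algebra observation'' and used only in Proposition~\ref{prop:Griffiths-obstruction}, so there is nothing to compare against, and your proof is correct and complete. Both points that matter are right. First, the image of the canonical map is exactly the set of maps $V\to\bigoplus_n\Q$ with only finitely many nonzero coordinates. Second, the map $e_n\mapsto\epsilon_n$ (zero on the complementary basis vectors) lies in $\Hom_{\Q}(V,\bigoplus_n\Q)$ but has every coordinate nonzero. This is precisely the case the proposition invokes: countably many copies of $\Q$, with failure of surjectivity.
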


\begin{proposition}\label{prop:Griffiths-obstruction}
Let \(k\) be algebraically closed, let \(X/k\) be smooth projective, and let
\((A_i)_{i\in I}\) be a small family of torsion-free abelian groups.  Then the
natural map
\[
        \bigoplus_iR^2_{\mathrm{nr}}(\Gm\otimes A_i)(X)
        \longrightarrow
        R^2_{\mathrm{nr}}\left(\Gm\otimes\bigoplus_iA_i\right)(X)
\]
has cokernel naturally isomorphic to
\[
        \Coker\left[
        \bigoplus_i\Hom(\Griff_1(X),A_i)
        \longrightarrow
        \Hom\left(\Griff_1(X),\bigoplus_iA_i\right)
        \right].
\]
In particular, if \(\Griff_1(X)\otimes\Q\) is infinite-dimensional and
\(A_i=\Q\) for countably many \(i\), then the comparison map is not surjective.
\end{proposition}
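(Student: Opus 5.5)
We compare the short exact sequences \eqref{eq:R2-GmA} for the groups $A_i$ and for $A=\bigoplus_iA_i$, and then apply the snake lemma. First, $\bigoplus_iA_i$ is again torsion-free. The functor $A\mapsto\Gm\otimes A$ commutes with direct sums, so $\Gm\otimes\bigoplus_iA_i\simeq\bigoplus_i(\Gm\otimes A_i)$. We need the sequence \eqref{eq:R2-GmA} to be natural in the torsion-free group $A$, i.e. functorial for homomorphisms $A\to A'$. This should follow from the Kahn--Sujatha construction: the sequence comes from \eqref{eq:KS-long} applied to $F_A$, and all maps there are functorial in the sheaf. Granting this, the inclusions $A_i\to A$ give a commutative diagram with exact rows:
\[
\begin{array}{ccccccccc}
0&\to&\bigoplus_iD^1(X)\otimes A_i&\to&\bigoplus_iR^2_{\mathrm{nr}}F_{A_i}(X)&\to&\bigoplus_i\Hom(\Griff_1(X),A_i)&\to&0\\
&&\downarrow&&\downarrow&&\downarrow&&\\
0&\to&D^1(X)\otimes A&\to&R^2_{\mathrm{nr}}F_A(X)&\to&\Hom(\Griff_1(X),A)&\to&0 .
\end{array}
\]
The top row is exact because direct sums of short exact sequences of abelian groups are exact.

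The left vertical map is an isomorphism, since tensor product commutes with direct sums. The right vertical map is injective. Indeed, a finite sum of homomorphisms $f_i:\Griff_1(X)\to A_i$ that becomes zero in $\bigoplus_iA_i$ has each component $f_i=0$. By the snake lemma, the middle vertical map then has kernel zero and cokernel isomorphic to the cokernel of the right vertical map. This is the claimed identification. The middle kernel being zero is also consistent with Theorem \ref{thm:Rq-obstruction}.

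For the final claim, take $A_i=\Q$ for $i\in\mathbb N$. Because $\Q$ and $\bigoplus_{\mathbb N}\Q$ are uniquely divisible, every homomorphism $\Griff_1(X)\to\bigoplus\Q$ factors through $V=\Griff_1(X)\otimes\Q$. We therefore get identifications $\Hom(\Griff_1(X),\Q)=\Hom_\Q(V,\Q)$ and $\Hom(\Griff_1(X),\bigoplus\Q)=\Hom_\Q(V,\bigoplus\Q)$. Now apply Lemma \ref{lem:hom-vector} to the infinite-dimensional space $V$.

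If one needs the explicit failure for countably many copies, as opposed to some unspecified family, choose linearly independent vectors $v_n\in V$. Extend them to a basis and define $\phi(v_n)=e_n$, with $\phi=0$ on the remaining basis vectors. Then $\phi$ has infinitely many nonzero components, so it is not in the image of $\bigoplus_n\Hom_\Q(V,\Q)$. The cokernel is therefore nonzero, and the comparison map is not surjective.

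The main obstacle is the naturality of \eqref{eq:R2-GmA} in $A$. Kahn--Sujatha only remark that "the same argument" works, so naturality must be checked by tracing their proof. I would try to derive it from the functoriality of \eqref{eq:KS-long} together with the identifications of $H^2(X,F_A)$ and of $\DMeff(\nu^{\ge1}M(X),F_A[1])$ in terms of cycles, all of which are natural in $F_A$.
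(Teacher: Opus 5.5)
Your proposal is correct and follows the paper's proof. You compare the sequences \eqref{eq:R2-GmA} for the $A_i$ and for $\bigoplus_iA_i$, observe that the left vertical map is an isomorphism and the right one injective, apply the snake lemma, and reduce the last claim to $\Hom_\Q(\Griff_1(X)\otimes\Q,-)$ via Lemma~\ref{lem:hom-vector}. The paper also assumes without comment the naturality of \eqref{eq:R2-GmA} in $A$ that you flag, and your explicit map $\phi$ supplies the unproved Lemma~\ref{lem:hom-vector} in the case of countably many copies of $\Q$.
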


\begin{proof}
Apply the natural exact sequence \eqref{eq:R2-GmA} to each \(A_i\) and to
\(\bigoplus_iA_i\).  Since tensor product commutes with direct sums,
\[
        \bigoplus_i(D^1(X)\otimes A_i)
        \xrightarrow{\sim}
        D^1(X)\otimes\bigoplus_iA_i.
\]
The map
\[
        \bigoplus_i\Hom(\Griff_1(X),A_i)\to
        \Hom\left(\Griff_1(X),\bigoplus_iA_i\right)
\]
is injective.  The snake lemma applied
to the resulting diagram of short exact sequences gives the asserted cokernel
formula.

If \(A_i=\Q\), then
\[
        \Hom_{\Z}(\Griff_1(X),\Q)
        \simeq \Hom_{\Q}(\Griff_1(X)\otimes\Q,\Q).
\]
The last assertion follows from Lemma \ref{lem:hom-vector}.
\end{proof}

\begin{remark}
Examples with \(\Griff_1(X)\otimes\Q\) infinite-dimensional are known in the
classical literature on Griffiths groups, starting with Clemens' work on
homological equivalence modulo algebraic equivalence and later refinements such
as Schoen's examples \cite{Clemens,Schoen}.  Kahn--Sujatha use such examples in
Section 8 of \cite{KahnSujathaDerived} to show that \(R^2_{\mathrm{nr}}\) does
not preserve arbitrary direct sums.  Proposition \ref{prop:Griffiths-obstruction}
records the exact cokernel responsible for this failure in the family
\(\Gm\otimes A\).
\end{remark}

\subsection{A Bockstein calculation for \texorpdfstring{$\Gm/m$}{Gm/m}}

For completeness we include the short Bockstein argument proving the calculation
of Kahn and Sujatha for \(R^1_{\mathrm{nr}}(\Gm/m)\), which is stated without proof
in their introduction \cite[p. 5]{KahnSujathaDerived}.  We keep the notation of
\eqref{eq:R2-Gm}.  
We write
\[
        {}_mD^1(X)=\Ker\bigl(m:D^1(X)\to D^1(X)\bigr)
\]
for the subgroup killed by \(m\).  

\begin{proposition}\label{prop:Gm-mod-m}
Let \(k\) be algebraically closed, let \(X/k\) be smooth projective and
connected, and let \(m>0\).  Then there is a natural short exact sequence
\[
0\longrightarrow
        \NS(X)_{\tors}/m
\longrightarrow
        R^1_{\mathrm{nr}}(\Gm/m)(X)
\longrightarrow
        {}_mD^1(X)
\longrightarrow 0 .
\]
Here \(\Gm/m\) denotes the cokernel of multiplication by \(m\) on \(\Gm\) in
\(\HI(k)\).
\end{proposition}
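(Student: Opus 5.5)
The plan is to obtain the sequence from the Bockstein long exact sequence attached to multiplication by \(m\) on \(\Gm\), and then to feed in the Kahn--Sujatha computations \cite[Theorem 1(i)--(iii)]{KahnSujathaDerived} recalled above.

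\emph{Step 1: reducing to an injective map.} Multiplication by \(m\) on \(\Gm\) need not be injective in \(\HI(k)\). Its kernel is \(\mu_m\). If \(p\mid m\) this kernel is zero, since smooth schemes are reduced. If \(p\nmid m\), then because \(k\) is algebraically closed it is the constant sheaf \(\Z/m\), which is birational. So I would factor multiplication by \(m\) as \(\Gm\twoheadrightarrow m\Gm\hookrightarrow\Gm\), giving short exact sequences
\[
0\to\mu_m\to\Gm\to m\Gm\to0,\qquad 0\to m\Gm\to\Gm\to\Gm/m\to0
\]
in \(\HI\).

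Since \(i^\circ\) is fully faithful, \(\Rnr\) restricted to birational objects is the identity. Hence \(R^q_{\mathrm{nr}}\mu_m=0\) for \(q>0\), and \(R^0_{\mathrm{nr}}\mu_m=\mu_m\). I will work with the groups \(H^n(X,\Rnr(F[0]))\simeq R^n_{\mathrm{nr}}F(X)\) from \eqref{eq:KS-long}. These come from the exact triangulated functor \(\Rnr\), so short exact sequences in \(\HI\) give long exact sequences of these groups directly, with no need to check exactness of evaluation at \(X\).

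Applying this to the first sequence, the map \(R^q_{\mathrm{nr}}\Gm(X)\to R^q_{\mathrm{nr}}(m\Gm)(X)\) is an isomorphism for \(q\ge2\). For \(q=1\) it is injective, since \(R^1_{\mathrm{nr}}\mu_m=0\). It is also surjective, because its cokernel injects into \(R^2_{\mathrm{nr}}\mu_m(X)=0\).

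\emph{Step 2: the Bockstein sequence.} The second short exact sequence gives
\[
R^1_{\mathrm{nr}}(m\Gm)(X)\to R^1_{\mathrm{nr}}\Gm(X)\to R^1_{\mathrm{nr}}(\Gm/m)(X)\to R^2_{\mathrm{nr}}(m\Gm)(X)\to R^2_{\mathrm{nr}}\Gm(X).
\]
Composing with the isomorphisms of Step 1, both outer maps become the maps induced by \(m:\Gm\to\Gm\), that is, multiplication by \(m\). This yields a natural short exact sequence
\[
0\to R^1_{\mathrm{nr}}\Gm(X)/m\to R^1_{\mathrm{nr}}(\Gm/m)(X)\to {}_m R^2_{\mathrm{nr}}\Gm(X)\to0.
\]

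\emph{Step 3: identifying the two outer terms.} For the left term, \(R^1_{\mathrm{nr}}\Gm(X)\simeq\Pic^\tau(X)\). The group \(\Pic^0(X)\) is the \(k\)-points of an abelian variety over an algebraically closed field, hence divisible. Since \(\Pic^\tau(X)/\Pic^0(X)=\NS(X)_{\tors}\), right exactness of \(-\otimes\Z/m\) gives \(\Pic^\tau(X)/m\simeq\NS(X)_{\tors}/m\).

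For the right term, take \(m\)-torsion in \eqref{eq:R2-Gm}. Because \(\Hom(\Griff_1(X),\Z)\) is torsion-free, left exactness of \({}_m(-)\) gives \({}_mR^2_{\mathrm{nr}}\Gm(X)\simeq{}_mD^1(X)\).

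\emph{Main obstacle.} The delicate point is Step 1 together with the compatibility in Step 2. One has to verify that the composite identifications really turn the connecting maps into multiplication by \(m\), and that the kernel \(\mu_m\) contributes nothing in degrees \(1\) and \(2\). This is where birationality of \(\mu_m\) over algebraically closed \(k\) is used. Naturality in \(X\) of the resulting sequence then follows from naturality of all the maps involved.
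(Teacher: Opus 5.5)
Your proposal is correct and follows the paper's proof essentially step for step. It uses the same factorization \(\Gm\twoheadrightarrow m\Gm\hookrightarrow\Gm\), the vanishing of \(R^q_{\mathrm{nr}}\) for \(q>0\) on the birational kernel, the identification of the outer maps with multiplication by \(m\) via additivity, and the same treatment of the end terms via divisibility of \(\Pic^0(X)\) and torsion-freeness of \(\Hom(\Griff_1(X),\Z)\).

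One small slip: when \(p\mid m\) the kernel is not zero unless \(m\) is a power of \(p\). Writing \(m=p^am'\) with \(p\nmid m'\), reducedness gives \(\mu_m=\mu_{m'}\simeq\Z/m'\). This is still a constant, hence birational, sheaf, so the argument is unaffected.
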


\begin{proof}
Let
\[
        K=\Ker([m]:\Gm\to\Gm),\qquad
        I=\im([m]:\Gm\to\Gm),\qquad
        Q=\Gm/m.
\]
Since \(\HI(k)\) is abelian, multiplication by \(m\) factors as two short exact
sequences
\begin{equation}\label{eq:Bockstein-two-ses}
        0\to K\to \Gm\to I\to0,
        \qquad
        0\to I\to \Gm\to Q\to0.
\end{equation}
The sheaf \(K\) is birational, therefore
\(R^q_{\mathrm{nr}}K=0\) for \(q>0\) \cite[Lemma 2.4]{KahnSujathaDerived}.

The first short exact sequence in \eqref{eq:Bockstein-two-ses} therefore gives
isomorphisms
\begin{equation}\label{eq:I-Gm-iso}
        R^q_{\mathrm{nr}}\Gm(X)\xrightarrow{\sim} R^q_{\mathrm{nr}}I(X)
        \qquad(q\ge1).
\end{equation}
Under these identifications, the morphism induced by the inclusion
\(I\hookrightarrow\Gm\),
\[
        R^q_{\mathrm{nr}}I(X)\longrightarrow R^q_{\mathrm{nr}}\Gm(X),
\]
is multiplication by \(m\).  This is because the composite
\(\Gm\to I\hookrightarrow\Gm\) is exactly the endomorphism \([m]\) of \(\Gm\),
and \(R^q_{\mathrm{nr}}\) is additive.

Apply the long exact cohomology sequence of the cohomological \(\delta\)-functor
\(R^\bullet_{\mathrm{nr}}\), cf. \cite[Section 2]{KahnSujathaDerived}, to the
second short exact sequence in \eqref{eq:Bockstein-two-ses}.  The relevant part is
\[
        R^1_{\mathrm{nr}}I(X)\to R^1_{\mathrm{nr}}\Gm(X)\to
        R^1_{\mathrm{nr}}Q(X)\to R^2_{\mathrm{nr}}I(X)\to
        R^2_{\mathrm{nr}}\Gm(X).
\]
Using \eqref{eq:I-Gm-iso}, this becomes the Bockstein short exact sequence
\begin{equation}\label{eq:Bockstein-short}
0\to
        R^1_{\mathrm{nr}}\Gm(X)/m
\to
        R^1_{\mathrm{nr}}(\Gm/m)(X)
\to
        {}_mR^2_{\mathrm{nr}}\Gm(X)
\to0 .
\end{equation}

We now identify the two end terms.  By 
\cite[Theorem 1(ii)]{KahnSujathaDerived},
\[
        R^1_{\mathrm{nr}}\Gm(X)\simeq \Pic^\tau(X).
\]
There is an exact sequence
\[
        0\to \Pic^0(X)\to \Pic^\tau(X)\to \NS(X)_{\tors}\to0.
\]
The group \(\Pic^0(X)=\Pic^0_{X/k}(k)\) is divisible, because \(k\) is algebraically closed.  Therefore
\[
        \Pic^\tau(X)/m\simeq \NS(X)_{\tors}/m.
\]
This identifies the left term of \eqref{eq:Bockstein-short}.

For the right term, use the short exact sequence \eqref{eq:R2-Gm}
\[
        0\to D^1(X)\to R^2_{\mathrm{nr}}\Gm(X)
        \to \Hom(\Griff_1(X),\Z)\to0.
\]
The last group is torsion-free, hence every element of \(R^2_{\mathrm{nr}}\Gm(X)\) killed by \(m\) lies in
\(D^1(X)\), and conversely.   Thus
\[
        {}_mR^2_{\mathrm{nr}}\Gm(X)={}_mD^1(X).
\]
Substituting these identifications in \eqref{eq:Bockstein-short} proves the
proposition.
\end{proof}

\section{Torsion birational motives and normalized invariants}
\label{sec:torsion-birational}

This section records two applications of the fact that the functors
\(R^q_{\mathrm{nr}}\), for \(q>0\), are normalized birational motivic
invariants on smooth projective varieties.  We first recall the small amount of
terminology on birational Chow motives which is needed in the sequel. In the statements involving normalized birational Chow motives, we assume that the exponential characteristic of
\(k\) is invertible in \(R\).

\subsection{Birational and normalized Chow motives}

We write
\(\Choweff_R(k)\) for the covariant category of effective Chow motives
with coefficients in \(R\), and \(\heff(X)\) for the effective Chow motive
of \(X\in\SmPr(k)\).  The category of birational Chow motives
\(\Chowbir_R(k)\) is the quotient of \(\Choweff_R(k)\) obtained by
killing the Lefschetz motive; equivalently, when the exponential
characteristic of \(k\) is invertible in \(R\), it is the pseudo-abelian
envelope of \(\Choweff_R(k)\) modulo the ideal of morphisms factoring
through Tate twists \(M(1)\).  We denote the image of \(\heff(X)\) by
\(\hbir(X)\).  For connected smooth projective \(X,Y\), one has the standard
formula
\begin{equation}\label{eq:Chowbir-Hom}
        \Chowbir_R(k)(\hbir(X),\hbir(Y))
        \simeq \CH_0(Y_{k(X)})\otimes R .
\end{equation}
See \cite[Section~1.1]{KahnTorsionOrder} and 
\cite[Section~2.3]{SatoYamazaki}; the latter also recall the comparison of the
standard variants of \(\Chowbir_R\) when the exponential characteristic is
invertible in \(R\).

The \emph{normalized birational Chow category} \(\Chownor_R(k)\) is the
quotient of \(\Chowbir_R(k)\) by the ideal of morphisms factoring through
the unit object
\[
        \bbone_R=\hbir(\Spec k).
\]
We write \(\hnor(X)\) for the image of \(X\), and abbreviate
\[
        \Chownor_R(T,S)
        :=\Chownor_R(k)(\hnor(T),\hnor(S)).
\]
For connected smooth projective \(X,Y\), the description above gives
\begin{equation}\label{eq:Chownor-Hom}
        \Chownor_R(k)(\hnor(X),\hnor(Y))
        \simeq
        \Coker\bigl(\CH_0(Y)\otimes R
        \longrightarrow \CH_0(Y_{k(X)})\otimes R\bigr),
\end{equation}
where the map is induced by extension of scalars.  This is
\cite[Definition~1.4]{KahnTorsionOrder}; see also
\cite[Section~2.3, especially (2.4)]{SatoYamazaki}.

\begin{definition}\label{def:normalized-invariant}
Let \(R\) be a coefficient ring and let \(\mathcal C\) be an \(R\)-linear
additive category.  A contravariant functor
\(\Psi:\SmPr(k)^{\mathrm{op}}\to\mathcal C\) is called \emph{motivic} if it
factors through an \(R\)-linear additive functor
\(\Choweff_R(k)^{\mathrm{op}}\to\mathcal C\).  It is \emph{birational} if it
factors through \(\Chowbir_R(k)^{\mathrm{op}}\), and it is \emph{normalized} if
\(\Psi(\Spec k)=0\).
\end{definition}

If \(R\) is a coefficient ring in which the exponential characteristic of
\(k\) is invertible, then a functor
\(\SmPr(k)^{\mathrm{op}}\to R\text{-}\mathrm{Mod}\) is normalized, birational
and motivic if and only if it factors through
\(\Chownor_R(k)^{\mathrm{op}}\); see
\cite[Lemma~2.6]{SatoYamazaki}.

\begin{definition}\label{def:torsion-order}
Let \(A\) be an object of an additive category.  We say that \(A\) is
\emph{torsion} if there exists \(m>0\) such that
\(m\cdot\id_A=0\).  If such an \(m\) exists, the smallest one is called the
\emph{torsion order} of \(A\).

Following Kahn, a connected smooth projective variety \(X\) is said to have
finite torsion order if
\[
        \CH_0(X_{k(X)})\otimes\Q\simeq \Q .
\]
Equivalently, the class of the generic point \(\eta_X\) in
\[
        \Coker\bigl(\CH_0(X)\to \CH_0(X_{k(X)})\bigr)
\]
is torsion. The order of this class is Kahn's torsion order and is denoted
\(\Tor(X)\). If the above rational triviality condition fails, we set
\(\Tor(X)=0\). See \cite[Definition~1.5 and Lemma~1.6]{KahnTorsionOrder};
compare also \cite[Proposition~2.10 and Definitions~2.12--2.13]{SatoYamazaki}.

\end{definition}

\begin{remark} In the language of normalized birational Chow motives this is equivalent,
in the appropriate coefficient range, to \(\hnor(X)\) being a torsion object.
\end{remark}

\subsection{Derived unramified functors as normalized motivic invariants}

\begin{proposition}\label{prop:Rq-normalized-invariant}
Let \(F\in\HI_R(k)\) and let \(q>0\).  The functor
\[
        \SmPr(k)^{\mathrm{op}}\longrightarrow R\text{-}\mathrm{Mod},
        \qquad
        X\longmapsto R^q_{\mathrm{nr},R}F(X)
\]
is a normalized birational motivic invariant.
\end{proposition}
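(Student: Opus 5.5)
We prove the three properties in turn: motivic, birational and normalized.

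\emph{Motivic.} We start from the identification \eqref{eq:identify-B-Rnr},
\[
        R^q_{\mathrm{nr},R}F(X)\simeq\DMeffR\bigl(i^\circ\nu_{\le0}M(X)_R,F[q]\bigr),
\]
which is natural in \(X\in\SmPr(k)\). Voevodsky's embedding \(\Choweff_R(k)\to\DMeffR(k)\) is a fully faithful \(R\)-linear additive functor sending \(\heff(X)\) to \(M(X)_R\); this is covariant, matching the covariant convention for \(\Choweff\) used here. Hence the functor \(X\mapsto R^q_{\mathrm{nr},R}F(X)\) is the composite
\[
        \Choweff_R(k)^{\mathrm{op}}\to\DMeffR^{\mathrm{op}}
        \xrightarrow{\nu_{\le0}}\DMoR^{\mathrm{op}}
        \xrightarrow{\DMoR(-,\,R_{\mathrm{nr},R}F[q])}R\text{-}\mathrm{Mod}.
\]
Here we use the adjunction \(i^\circ\dashv R_{\mathrm{nr},R}\) to rewrite the Hom as \(\DMoR(\nu_{\le0}M(X)_R,R_{\mathrm{nr},R}F[q])\). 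Each functor in the composite is additive and \(R\)-linear. We must check that this agrees with the functoriality of \(R^q_{\mathrm{nr},R}F\) under pullback along morphisms of varieties. This holds because pullback on \(H^q_{\Nis}(X,R_{\mathrm{nr},R}F)\) is induced by \(M(f)\), and the identification \eqref{eq:identify-B-Rnr} is natural in \(M(X)\).

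\emph{Birational.} We must show that the \(R\)-linear functor kills every morphism of \(\Choweff_R\) that factors through an object of the form \(N(1)\), and then extend to the pseudo-abelian envelope by idempotent completeness of \(R\text{-}\mathrm{Mod}\). Tate twists \(N(1)\) of effective Chow motives map to objects of \(\DMeffR(1)\), and \(\nu_{\le0}\) kills \(\DMeffR(1)\) by the definition of \(\DMoR\) as a Verdier quotient. Therefore any morphism factoring through \(N(1)\) goes to zero in \(\DMoR\), and its image under our functor vanishes. Since \(\Chowbir_R\) is the quotient of \(\Choweff_R\) by this ideal (in the coefficient range fixed in the section), the functor factors through \(\Chowbir_R(k)^{\mathrm{op}}\). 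The only point to watch is the variant of \(\Chowbir_R\) when \(p\) is not invertible. With ``killing the Lefschetz motive'' taken as the definition, the same argument applies.

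\emph{Normalized.} We must show \(R^q_{\mathrm{nr},R}F(\Spec k)=0\) for \(q>0\). By \eqref{eq:KS-long} and the degeneration noted there, \(R^q_{\mathrm{nr},R}F(\Spec k)=H^q_{\Nis}(\Spec k,R_{\mathrm{nr},R}F[0])\). Nisnevich cohomology of the spectrum of a field vanishes in positive degrees, and \(R^0_{\mathrm{nr}}\)-type sheaves have no higher cohomology there. So only \(H^q\) of the complex \(R_{\mathrm{nr},R}(F[0])\) evaluated at \(k\) remains, which is \(R^q_{\mathrm{nr},R}F(k)\).

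A cleaner route is Proposition \ref{prop:R1-kernel} style: the sequence \eqref{eq:KS-long} for \(X=\Spec k\) involves \(\nu^{\ge1}M(\Spec k)=\ulHom(R(1),R)(1)=0\), since \(\ulHom(R(1),R)=R_{-1}=0\). Hence \(R^q_{\mathrm{nr},R}F(k)\simeq H^q_{\Nis}(\Spec k,F)=0\) for \(q>0\).

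The main obstacle is the first step: checking carefully that the Hom-description is compatible with contravariant functoriality in correspondences, not just in morphisms of varieties. I would handle it by defining the invariant directly as the composite above and noting that it restricts to the given functor on \(\SmPr(k)\).
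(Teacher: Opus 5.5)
Your proposal is correct and follows the paper's proof: the Hom description $R^q_{\mathrm{nr},R}F(X)\simeq\DMoR(\nu_{\le0}M(X)_R,R_{\mathrm{nr},R}(F[0])[q])$, factorization through Chow motives, and normalization via $\nu^{\ge1}M(\Spec k)_R=0$ together with $H^q_{\Nis}(\Spec k,F)=0$ for $q>0$. The one difference is that you get the birational factorization directly, from Voevodsky's functor $\Choweff_R\to\DMeffR$ (you need only its existence, not full faithfulness) and the fact that $\nu_{\le0}$ kills $\DMeffR(1)$, where the paper quotes the Kahn--Sujatha comparison theorem; also drop your first normalization paragraph, which is circular, since only the second route (the paper's) proves the vanishing.
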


\begin{proof}
For \(X\in\SmPr(k)\), adjunction gives
\begin{equation}\label{eq:Rq-as-DMo-Hom}
        R^q_{\mathrm{nr},R}F(X)
        \simeq
        \DMo_R(k)\bigl(M^\circ(X)_R,R_{\mathrm{nr},R}(F[0])[q]\bigr),
\end{equation}
where \(M^\circ(X)_R=\nu_{\le0}M(X)_R\).  The comparison theorem between pure
birational Chow motives and triangulated birational motives
\cite[Theorem~4.2.2]{KahnSujathaTriangulated} identifies the assignment
\(X\mapsto M^\circ(X)_R\) on smooth projective varieties with the corresponding
birational Chow-motive functor.  Hence \eqref{eq:Rq-as-DMo-Hom} is functorial for
Chow correspondences and factors through \(\Chowbir_R(k)^{\mathrm{op}}\).  Thus
it is a birational motivic invariant in the sense of Definition
\ref{def:normalized-invariant}.

It remains to check normalization.  For \(X=\Spec k\), the motive \(M(X)_R=R\) is birational, so
\(\nu^{\ge1}M(X)_R=0\) by \cite[Proposition 1.2]{KahnSujathaDerived}.  The long
exact sequence \eqref{eq:KS-long}, i.e. \cite[Proposition 2.2]{KahnSujathaDerived}, then gives
\[
        R^q_{\mathrm{nr},R}F(k)
        \simeq H^q_{\Nis}(\Spec k,F)=0
        \qquad(q>0),
\]
because every Nisnevich covering of \(\Spec k\) has a section. Hence, the functor is
normalized.
\end{proof}

\begin{corollary}\label{cor:torsion-order-bound}
Let \(X/k\) be smooth projective and connected, and assume that
\[
        \CH_0(X_{k(X)})\otimes\Q\simeq\Q.
\]
  Then, for every \(F\in\HI(k)\) and
all \(q>0\),
\[
        \Tor(X)\cdot R^q_{\mathrm{nr}}F(X)=0.
\]
If \(k\) is algebraically closed of exponential characteristic \(p\), and if
\(S/k\) is a smooth projective surface with finite torsion order,
then for every \(F\in\HI_{\Z[1/p]}(k)\) and all \(q>0\),
\[
        \exp_p\bigl(\NS(S)_{\tors}\bigr)\cdot
        R^q_{\mathrm{nr},\Z[1/p]}F(S)=0,
\]
where \(\exp_p(A)\) denotes the exponent of \(A\otimes\Z[1/p]\).
\end{corollary}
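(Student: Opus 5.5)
We will deduce both statements from Proposition \ref{prop:Rq-normalized-invariant} by a formal argument on normalized birational Chow motives. Fix \(F\) and \(q>0\), and let \(\Psi(-)=R^q_{\mathrm{nr},R}F(-)\); the coefficient ring is \(R=\Z\) in the first assertion and \(R=\Z[1/p]\) in the second. By Proposition \ref{prop:Rq-normalized-invariant}, \(\Psi\) is normalized, birational and motivic. By \cite[Lemma~2.6]{SatoYamazaki} it therefore factors through \(\Chownor_R(k)^{\mathrm{op}}\), at least when \(p\) is invertible in \(R\).

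The key point is the following. Any additive functor sends \(m\cdot\id_{\hnor(X)}\) to \(m\cdot\id_{\Psi(X)}\). Hence if \(m\cdot\id_{\hnor(X)}=0\) in \(\Chownor_R(k)\), then \(m\) kills \(\Psi(X)\). By \eqref{eq:Chownor-Hom}, the identity of \(\hnor(X)\) corresponds to the class of the generic point \(\eta_X\) in \(\Coker(\CH_0(X)\otimes R\to\CH_0(X_{k(X)})\otimes R)\). By Definition \ref{def:torsion-order}, this class is killed by \(\Tor(X)\), so \(\Tor(X)\cdot\id_{\hnor(X)}=0\), and the first assertion follows.

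The main obstacle is integrality when \(p>1\) and \(R=\Z\), since \cite[Lemma~2.6]{SatoYamazaki} requires \(p\) to be invertible. I would avoid the normalized category and argue directly with a decomposition of the diagonal. The relation \(\Tor(X)\eta_X\in\im\CH_0(X)\) gives a decomposition \(\Tor(X)\,\Delta_X=X\times x_0+Z\) in \(\CH^{\dim X}(X\times X)\), where \(x_0\) is a zero-cycle and \(Z\) is supported on \(D\times X\) for a proper closed subset \(D\subsetneq X\). Apply \(\Psi\) to this identity of correspondences. The term \(X\times x_0\) factors through \(\Spec k\) (after a finite extension of \(k\) if \(x_0\) is not rational; this still works by the transfers and the normalization argument for \(\Spec\) of a finite extension, where Nisnevich cohomology also vanishes). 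Its action is therefore zero because \(\Psi(\Spec k')=0\). The term \(Z\) acts by zero because it vanishes in \(\Chowbir\): over a generic point it is supported on a divisor, so it factors through a Tate twist. This uses the factorization of \(\Psi\) through \(\Chowbir_{\Z}\), i.e.\ through \(M^\circ(X)\) and \cite[Theorem~4.2.2]{KahnSujathaTriangulated}, which holds integrally. This integral step is where I expect the most care to be needed.

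For the surface case, take \(R=\Z[1/p]\), where the normalized formalism is available. It then suffices to know that the torsion order of \(\hnor(S)\) in \(\Chownor_{\Z[1/p]}(k)\) divides \(\exp_p(\NS(S)_{\tors})\). This is the theorem of Sato--Yamazaki on torsion birational motives of surfaces over algebraically closed fields: for a surface with finite torsion order, \(\Tor(S)\) agrees, up to \(p\)-parts, with the exponent of \(\NS(S)_{\tors}\), equivalently of \(\mathrm{Br}(S)\), and \(\hnor(S)\) is torsion of that order with \(\Z[1/p]\)-coefficients. Invoking that result and the key point above, \(\exp_p(\NS(S)_{\tors})\cdot\id_{\hnor(S)}=0\), which kills \(\Psi(S)=R^q_{\mathrm{nr},\Z[1/p]}F(S)\).
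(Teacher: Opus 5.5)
Your proposal is correct and is essentially the paper's argument. It combines Proposition~\ref{prop:Rq-normalized-invariant} with the fact that \(\Tor(X)\) kills every normalized birational motivic invariant: the paper simply cites Kahn's Lemma~1.6, which your decomposition-of-the-diagonal argument in \(\Chowbir_{\Z}\) reproves integrally, so Sato--Yamazaki's Lemma~2.6 is not needed for the first assertion. For surfaces it uses the equality \(\Tor_p(S)=\exp_p(\NS(S)_{\tors})\), which is Kahn's Corollary~5.4 rather than a theorem of Sato--Yamazaki.

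Two minor points. First, \(Z\) dies in \(\Chowbir_{\Z}\) directly by \eqref{eq:Chowbir-Hom}, because it restricts to zero in \(\CH_0(X_{k(X)})\); an integral factorization through a Tate twist in \(\Choweff\) would need a resolution of \(D\) in characteristic \(p\). Second, no finite extension of \(k\) is needed, since \(x_0\in\CH_0(X)\) is already a morphism \(\heff(\Spec k)\to\heff(X)\).
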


\begin{proof}
By Proposition \ref{prop:Rq-normalized-invariant}, the functor
\(X\mapsto R^q_{\mathrm{nr}}F(X)\) is a normalized motivic birational invariant, 
and \(\Tor(X)\) kills the value of every such invariant on \(X\)
\cite[Lemma~1.6]{KahnTorsionOrder}.  This proves the first assertion.

For surfaces over an algebraically closed field,  the
prime-to-\(p\) torsion order 
\[
        \Tor_p(S)=\exp_p(\NS(S)_{\tors})
\]
\cite[Corollary~5.4]{KahnTorsionOrder}, and this gives the stated annihilator.
\end{proof}

\subsection{Detection by low unramified cohomology on torsion birational surfaces}

In this subsection \(k\) is algebraically closed of exponential characteristic 
\(p\), and we put \(R=\Z[1/p]\).  For a smooth scheme \(X/k\), Sato and
Yamazaki use the prime-to-\(p\) unramified cohomology groups
\[
        H^i_{\mathrm{ur}}(X)
        =\varinjlim_{(n,p)=1} H^0_{\mathrm{Zar}}(X,\mathcal H^i_n),
\]
where \(\mathcal H^i_n\) is the Zariski sheaf associated with
\(U\mapsto H^i_{\mathrm{\acute{e}t}}(U,\mu_n^{\otimes(i-1)})\); see
\cite[Section~2.5, especially (2.9)]{SatoYamazaki}.  These functors are
normalized, birational and motivic invariants in the sense recalled above, hence they
extend to \(\Chownor_R(k)^{\mathrm{op}}\).  For
\(f\in\Chownor_R(T,S)\), we write
\[
        H^i_{\mathrm{ur}}(f):H^i_{\mathrm{ur}}(S)\to H^i_{\mathrm{ur}}(T)
\]
for the induced map.

\begin{corollary}\label{cor:SatoYamazaki-detection}
Let \(S/k\) be a connected smooth projective surface with finite torsion
order, let \(T/k\) be smooth projective, and let
\[
        f\in\Chownor_R(T,S).
\]
If
$H^i_{\mathrm{ur}}(f)=0\ \text{for }i=1,2,$
then for every \(F\in\HI_R(k)\) and every \(q>0\),
\[
        R^q_{\mathrm{nr},R}F(f):
        R^q_{\mathrm{nr},R}F(S)
        \longrightarrow
        R^q_{\mathrm{nr},R}F(T)
\]
is zero.
\end{corollary}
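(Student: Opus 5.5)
The plan is to reduce to a detection theorem of Sato and Yamazaki for torsion birational surfaces. By Proposition \ref{prop:Rq-normalized-invariant}, the functor \(X\mapsto R^q_{\mathrm{nr},R}F(X)\) on \(\SmPr(k)\) is a normalized birational motivic invariant with values in \(R\)-modules. Since \(R=\Z[1/p]\), the criterion recalled after Definition \ref{def:normalized-invariant} (\cite[Lemma~2.6]{SatoYamazaki}) shows that it factors through \(\Chownor_R(k)^{\mathrm{op}}\). So \(R^q_{\mathrm{nr},R}F(f)\) is well defined for \(f\in\Chownor_R(T,S)\), and it suffices to prove a statement about the morphism \(f\) in \(\Chownor_R(k)\) that every such additive functor sees.

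The key input is the structure theorem of Sato--Yamazaki for surfaces with finite torsion order over an algebraically closed field. For such \(S\), the object \(\hnor(S)\) is torsion, and morphisms \(\Chownor_R(T,S)\) are detected by the induced maps on \(H^1_{\mathrm{ur}}\) and \(H^2_{\mathrm{ur}}\). Concretely, I expect the relevant statement to read: for \(S\) as in the corollary and any smooth projective \(T\), the map
\[
        \Chownor_R(T,S)\longrightarrow \Hom\bigl(H^1_{\mathrm{ur}}(S),H^1_{\mathrm{ur}}(T)\bigr)\times\Hom\bigl(H^2_{\mathrm{ur}}(S),H^2_{\mathrm{ur}}(T)\bigr),
        \qquad f\longmapsto \bigl(H^1_{\mathrm{ur}}(f),H^2_{\mathrm{ur}}(f)\bigr)
\]
is injective. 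Heuristically, \(\hnor(S)\) decomposes according to \(\NS(S)_{\tors}\), whose dual appears in \(H^1_{\mathrm{ur}}\) and \(H^2_{\mathrm{ur}}=\mathrm{Br}\). Granting this, the hypothesis \(H^i_{\mathrm{ur}}(f)=0\) for \(i=1,2\) forces \(f=0\) in \(\Chownor_R(T,S)\). Then \(R^q_{\mathrm{nr},R}F(f)=R^q_{\mathrm{nr},R}F(0)=0\), since the factorization through \(\Chownor_R(k)^{\mathrm{op}}\) is by an additive functor.

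The main obstacle is locating and matching the precise Sato--Yamazaki theorem: its coefficient conventions (\(\Z[1/p]\) versus \(\Z\) or \(\Z_{(\ell)}\)), its variant of \(\Chownor\), and whether the detection is stated for all \(T\) or only for \(T=S\). If it is stated only for endomorphisms, I would reduce to that case using the identification \(\Chownor_R(T,S)\simeq\Coker(\CH_0(S)\otimes R\to\CH_0(S_{k(T)})\otimes R)\). Alternatively, I would work over the generic point of \(T\) and use that their argument is insensitive to the base field extension \(k(T)/k\). A second point to check is that the torsion condition on \(\hnor(S)\), that is, finite torsion order, is exactly their hypothesis, as the Remark after Definition \ref{def:torsion-order} indicates.
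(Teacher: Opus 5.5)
Your proposal is correct and matches the paper's proof. The paper also uses Proposition \ref{prop:Rq-normalized-invariant} together with \cite[Lemma~2.6]{SatoYamazaki} to factor $X\mapsto R^q_{\mathrm{nr},R}F(X)$ through $\Chownor_R(k)^{\mathrm{op}}$, then applies \cite[Theorem~7.3]{SatoYamazaki}, which gives $f=0$ from $H^1_{\mathrm{ur}}(f)=H^2_{\mathrm{ur}}(f)=0$ for arbitrary smooth projective $T$; your fallback reductions to endomorphisms or to the generic point of $T$ are therefore unnecessary.
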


\begin{proof}
By Proposition \ref{prop:Rq-normalized-invariant}, the functor
\(X\mapsto R^q_{\mathrm{nr},R}F(X)\) is normalized, birational and
motivic, and therefore factors through \(\Chownor_R(k)^{\mathrm{op}}\) by
\cite[Lemma~2.6]{SatoYamazaki}.  For such a surface
\(S\), the following are equivalent for
\(f\in\Chownor_R(T,S)\): \(f=0\), all normalized birational motivic
functors vanish on \(f\), and \(H^1_{\mathrm{ur}}(f)=H^2_{\mathrm{ur}}(f)=0\)
\cite[Theorem~7.3]{SatoYamazaki}.  Applying this theorem to the functor of
Proposition \ref{prop:Rq-normalized-invariant} proves the claim.
\end{proof}

\begin{remark}
Sato and Yamazaki also identify the first two unramified cohomology functors in
this setting with the usual first unramified cohomology and the Brauer group
functor, in the relevant prime to characteristic coefficients
\cite[Proposition~2.14]{SatoYamazaki}.  This is not used in the proof above, but
it explains why Corollary \ref{cor:SatoYamazaki-detection} is a detection result
by classical low-degree unramified invariants.
\end{remark}

\section*{Acknowledgements} The author thanks prof. Mikhail Bondarko for valuable comments and discussions.

\end{document}